\numberwithin{equation}{section}
\newtheorem{global-theorem}{Theorem}
\newtheorem{theorem}{Theorem}[section]
\newtheorem{lemma}[theorem]{Lemma}
\newtheorem{corollary}[theorem]{Corollary}
\newtheorem{definition}[theorem]{Definition}
\newtheorem{proposition}[theorem]{Proposition}
\newtheorem{remark}[theorem]{Remark}
\newtheorem{principle}[theorem]{Principle}
\newcommand{\rr}{{\mathbb R}}
\newcommand{\four}{{\bf 4}}
\newcommand{\six}{{\bf 6}}
\newcommand{\eight}{{\bf 8}}
\newcommand{\Znew}{Z^{\rm new}}
\newcommand{\dxo}{\langle 0 \rangle}
\newcommand{\dxa}{\langle 1 \rangle}
\newcommand{\dxb}{\langle 2 \rangle}
\newcommand{\dxc}{\langle 3 \rangle}
\newcommand{\dxd}{\langle 4 \rangle}
\newcommand{\dxma}{\langle -1 \rangle}
\newcommand{\dxmb}{\langle -2 \rangle}
\newcommand{\dxmc}{\langle -3 \rangle}
\newcommand{\dxmd}{\langle -4 \rangle}
\begin{document}

\title{Reduction for $SL(3)$ pre-buildings}

\author{Ludmil Katzarkov}
\address{University of Miami, Department of Mathematics, 1365 Memorial Drive, Coral Gables, FL 33146}

\author{Pranav Pandit}
\address{Universit\"at Wien, Fakult\"at 
f\"ur Mathematik, Oskar-Morgenstern-Platz 1, 1090 Wien, Austria}

\author{Carlos Simpson}
\address{Universit\'e C\^ote d'Azur, CNRS, LJAD, France}

\thanks{The authors wish to express their gratitude to D. Auroux, F. Haiden and Maxim Kontsevich for their help.
 We were partially  supported by
 by Simons research grant, NSF DMS 150908, ERC Gemis, DMS-
1265230, DMS-1201475, OISE-1242272, PASI,  
ANR 933R03/13ANR002SRAR, Simons collaborative Grant - HMS,
Russian Megagrant HMS and Automorphic Forms. The third author would like to thank the University of Miami for hospitality during the completion of this work.}

\subjclass[2010]{Primary 14H30; Secondary 34E20, 51E24}


\keywords{Spectral curve, 
Spectral network, Building, WKB exponent, BPS state}

\begin{abstract}
Given an $SL(3)$ spectral curve over a simply connected Riemann surface, we describe in detail the
reduction steps necessary to construct the core of a 
pre-building with versal harmonic map whose differential is
given by the spectral curve. 
\end{abstract}

\maketitle


\section{Introduction}

Let $X$ be a Riemann surface 
with a spectral covering $\Sigma \subset T^{\ast}X$ for the
group $SL(3)$. In \cite{KNPS2} we proposed in general terms
a reduction process that would construct a versal
$\Sigma$-harmonic map to an $SL(3)$ pre-building.
It was conjectured that if the $\Sigma$-spectral network \cite{GMN-Wall, GMN-SN, GMN-Snakes} has
no BPS states then the reduction process 
should be well-defined.

This conjecture would lead to a precise calculation of the
WKB exponents for singular perturbations whose spectral
curve has no BPS states, generalizing the known picture
\cite{Dumas} for
quadratic differentials and $SL(2)$. 

The purpose of the present paper is to provide more
details on the reduction process, particularly about
the combinatorial structure of the singularities that
occur and how they are arranged at each reduction step.  
We will show (although the proofs are sometimes only sketches) that the reduction steps are well-defined if
there are no BPS states. It is left for later to show that
the process finishes in finitely many steps. 

Assume $X$ is complete and simply connected. For the present
one should think of it as being the complex plane with 
$\Sigma$ a spectral covering similar to the one considered in our original example \cite{KNPS}. The case 
of the universal covering of a compact Riemann surface
would pose additional problems of non-finiteness of the
set of singularities, and the right notion of convergence seems
less clear.  

The process and results will be summed up in \S \ref{scholium}
and the reader is referred there. 

In the remainder of the introduction, we review 
the motivation for the reduction process considered here. 
A building $B$ for the group $SL(3)$ is a piecewise linear
cell complex that is covered by copies of the standard appartment $A=\rr ^2$, and indeed any two points 
of $B$ are contained in a common appartment. One of the main 
properties characterizing a building is that it is negatively curved. 

A {\em harmonic map} $h:X\rightarrow B$ is a continuous map such that
any point $x\in X$, except for a discrete set of singularities, admits a neighborhood $x\in U\subset X$ such that there is
an appartment $A\subset B$ with $h:U\rightarrow A\cong \rr ^2$ being a harmonic map.

The differential $dh$ is naturally a triple of real $1$-forms
$(\eta _1,\eta _2,\eta _3)$ with $\eta _1+\eta _2+\eta _3=0$. 
These are well-defined up to permutation. Now, these real harmonic forms are real parts of holomorphic $1$-forms $\eta _i=\Re \phi _i$ and the collection $\{ \phi _1,\phi _2,\phi _3\}$
defines the spectral curve $\Sigma \subset T^{\ast}X$. We say that $h$ is a  $\Sigma$-harmonic map. 

For a given spectral curve we would like to understand the 
$\Sigma$-harmonic maps to buildings. We conjectured in 
\cite{KNPS,KNPS2} that, under a certain genericity hypothesis, 
there should be an essentially uniquely defined 
map 
$$
h_{\phi}: X\rightarrow B_{\phi}^{\rm pre}
$$
depending only on the spectral curve $\Sigma = \{ \phi _1,\phi _2,\phi _3\}$ with the following properties: 
\newline
(1)\, the pre-building
$B^{\rm pre}_{\phi}$ is a negatively curved complex built out of enclosures in $A$ \cite{KNPS2}, and 
\newline
(2)\, 
any $\Sigma$-harmonic map to a building $h:X\rightarrow B$ 
factors through an embedding $B^{\rm pre}_{\phi}\rightarrow B$
isometric for the Finsler and vector distances. 

The conjectured 
genericity hypothesis for existence of $h_{\phi}$ 
is that the spectral network 
associated to $\Sigma$ should not have any BPS states
\cite{GMN-Wall, GMN-SN, GMN-Snakes}. 

Before getting to the proposed method for constructing 
$B^{\rm pre}_{\phi}$, let us consider the implications for exponents of WKB problems. There are several different ways of getting harmonic mappings to buildings, such as 
Gromov-Schoen's theory \cite{GromovSchoen}. Parreau 
interpreted boundary points of the character variety 
as actions on buildings \cite{Parreau}. In \cite{KNPS} we
extended Parreau's theory slightly for the situation of
WKB problems, getting a control on the differential. 
Suppose $\nabla _t$ is a singular perturbation
of flat connections, for $t$ a large parameter. 
There are two typical ways of getting $\nabla_t$,
the Riemann-Hilbert situation
$$
\nabla_t= \nabla _0 + t\varphi 
$$
or by solution of Hitchin's equations for the Higgs bundle
$(E,t\varphi )$. In either case, there is an associated 
limiting Higgs bundle $(E,\varphi )$ and we let $\Sigma$ be
its spectral curve. 

For $P,Q\in X$ let $T_{PQ}(t):E_P\rightarrow E_Q$ 
denote the transport for $\nabla _t$. For an ultrafilter 
$\omega$ on $t\rightarrow \infty$ define the
{\em exponent}
$$
\nu ^{\omega}_{PQ}:= \lim _{\omega} 
\frac{1}{t}\log \left\| T_{PQ}(t)\right\| .
$$
There is a similar {\em vector exponent} \cite{KNPS} that is 
a point in the positive Weyl chamber of $A$. 

The groupoid version of Parreau's theory \cite{Parreau, KNPS}
gives a map to a building
$h_{\omega}: X\rightarrow B_{\omega}$ such that
the Finsler distance (resp. vector distance) between 
$h_{\omega}(P),h_{\omega}(Q)$ is the exponent 
$\nu ^{\omega}_{PQ}$ (resp. the vector exponent).

We showed in \cite{KNPS} for the Riemann-Hilbert
situation 
that $h_{\omega}$ is a $\Sigma$-harmonic map. 
Mochizuki \cite{Mochizuki} 
showed this for the Hitchin WKB problem. 

If there exists a $\Sigma$-harmonic map $h_{\phi}$
satisfying the properties (1), (2) above, then it follows that
$\nu^{\omega}_{PQ}$ is calculated as the Finsler distance
between $h_{\phi}(P),h_{\phi}(Q)$. In particular, it depends
only on the spectral curve $\Sigma$. Indepenence of the choice
of ultrafilter means that the ultrafilter limit used to
define the exponent is actually a limit, and we obtain 
a calculation of the WKB exponents for our singular perturbation. 

Turn now to the reduction process for constructing $B^{\rm pre}_{\phi}$. A conjectural yet detailed picture of this construction was set out in \cite{KNPS2} and readers are referred there for a full explanation. The first step was
to make an initial construction. That is essentially what
we shall be calling $Z^{\rm init}$ below, although the
initial construction for the pre-building has to include
additional small parallelogram-shaped regions corresponding to
the folded pieces $\widetilde{Q}_i$ that we'll meet in
\S \ref{sec-initial} below. Our $Z^{\rm init}$ has
these trimmed off. 

The main problem of the initial construction is that it contains points of positive curvature, referred to as $4$-fold points or
$\four _2$ points below. These have to collapse in some way
under any harmonic map to a building $B$ since $B$ is negatively curved. The construction of the pre-building consists of successively doing such a collapsing operation. 

Unfortunately, the direction of collapsing at a $4$-fold point is not well-defined, rather there are two possibilities. For that reason, the notion of {\em scaffolding} was introduced in \cite{KNPS2}. The initial scaffolding formalizes the existence of small neighborhoods $U$ that  map, in an unfolded way, into
appartments of the building. It follows that $U$ should not be folded in the map $h_{\phi}$ to $B^{\rm pre}_{\phi}$.
The edges gotten by gluing together sectors are, on the other hand, to be folded further, and this collection of data is complete: every edge is either marked as folded or unfolded (open) in the scaffolding. 

The scaffolding tells us which direction to fold at a $\four_2$ point. The remaining difficulty is to propagate the information of the scaffolding into the new constructions obtained by collapsing. It was conjectured in \cite{KNPS2} that this should be possible, under the hypothesis of absence of BPS states in the original spectral network. 

The purpose of the present paper is to prove this conjecture on propagation of the scaffolding. We show how to make a series
of reduction steps and how to propagate the scaffolding and
other required information so that the series of reduction steps is well-defined.

The present work does not yet result in a full construction of $B^{\rm pre}_{\phi}$. Notably missing is a convergence statement saying that the process stops in a  (locally) finite number 
of steps. Some parts of our arguments are also sketches rather than full proofs, and we don't provide here 
a justification for the choice of initial construction (Principle \ref{chooseinitial}). 

It was observed in \cite{KNPS2} that each step of the reduction process towards the pre-building, could be accompanied by a trimming of the just glued-together parallelograms. If one does that, then the sequence of constructions is a sequence of $2$-dimensional surfaces. This point of view will be useful for the
program of generalizing Bridgeland-Smith's work on stability conditions, discussed briefly in \S \ref{sec-further}
at the end. Also, the combinatorics of the reduction steps are all contained in this sequence of surfaces. Therefore, in the
present paper we shall include the trimming operation as part of our reduction steps. In order to construct the pre-building $B^{\rm pre}_{\phi}$ one should put back in the pieces that were trimmed off---the procedure for doing that was explained in \cite{KNPS2}.

\section{Spectral networks in $X$}

Start by considering spectral networks in $X$. 
Away from the branch points, the spectral curve $\Sigma$ 
consists of three holomorphic $1$-forms $\phi _1,\phi _2,\phi _3$ and setting $\phi  _{ij}:= \phi _i-\phi _j$ 
these define {\em foliation lines} $f_{ij}$ where 
$\Re \phi _{ij}=0$. Assume the ramification of $\Sigma$ consists of simple branch points.
At a branch point, two indices are interchanged, picking out
one of the foliations $\Re \phi _{ij}=0$ whose singular leaves starting from the branch point are the initial edges of the
spectral network. 

Away from the {\em caustics} where the three foliation lines
are tangent, the coordinates $\Re \phi _i$ define a
flat structure on $X$ by local identification with the
standard apartment 
$$
A:= \{ (x_1,x_2,x_3)\in \rr ^3, \;\;  
x_1+x_2+x_3=0\} \cong \rr ^2
$$
for buildings of the group $SL(3)$. We use this flat structure when speaking of angles.

We are going to add some extra singularities,
so suppose given a finite subset $S\subset X$ 
and for each $p\in S$ one of the three foliations $f_p$. 
A {\em spectral network graph} is a map from a trivalent
graph $G$, that can have endpoints as well as at most one infinite end,
to $X$ that sends endpoints of $G$ to elements of $S$,
that sends edges to foliation lines, and that sends trivalent
vertices to {\em collisions} \cite{GMN-SN}, points where
foliation lines $f_{ij},f_{jk},f_{ik}$ meet at $120^{\circ}$
angles. 
We require that an endpoint
of the graph going to $p\in S$ has adjoining edge going to a foliation line for the given foliation $f_p$.

A foliation line in $X$ is an {\em SN-line} if there exists
a spectral network graph such
that the given foliation line is in the image of
an edge of the graph that is either adjacent to an 
endpoint, or is an  infinite end.

A {\em BPS state} \cite{GMN-Wall, GMN-SN, GMN-Snakes} 
is a compact 
spectral network graph. Our main hypothesis will be that
these don't exist. 

We now note how to add singularities while conserving this
hypothesis.

The original set $S_0$ of singular points is equal
to the set of ramification points of the spectral curve,
assumed to be simple ramifications. The foliation line $f_p$ at a ramification point is the one determined by the two 
sheets of the spectral curve that come together at that point. 

The following proposition will allow us successively
to add points to the set of singularities. 

\begin{proposition}
\label{addsn}
Suppose given a set of singularities $S_{i-1}$ such that the
resulting spectral network doesn't have BPS states. 
Choose a point $p_{i}$ in general position along the
interior of a 
caustic curve, let $f_{p_{i}}$ be one of the 
foliation lines at $p_{i}$ and let $S_{i}:= 
S_{i-1}\cup \{ p_{i}\}$. Then the spectral network
associated to $S_{i}$ also doesn't have any BPS states.
\end{proposition}
\begin{proof}
Suppose we are given a spectral network graph 
$\beta : G\rightarrow X$
that is a BPS state for $S_i$. Consider a nearby point
$p_i(\epsilon )$ obtained  by moving it a small distance
$\epsilon$ in one
direction along the caustic curve (the caustic is generically
transverse to the foliation lines otherwise it would constitute a BPS state itself). 
We may assume that 
the BPS state follows to a nearby one $\beta (\epsilon ):G\rightarrow X$. The foliation lines are defined by differential
forms $\Re \phi _{ij}$ so this gives us a way of measuring transverse distances; in these terms it makes sense to
talk about the distance between one of the foliation lines and
an adjacent one for the same foliation. We should choose
for each edge of the graph a sign for the form in question. 
Now, label edges of the graph $e\in {\rm edge}(G)$ by integers $k(e)$ such that 
the edge $e$ moves by $k(e)\epsilon$. An edge $e'$ adjoining
an endpoint of the graph
that goes to a previous singularity $q\in S_{i-1}$ has to be
labeled with $k(e')=0$, since $q$ doesn't move with $\epsilon$.
There is a balancing condition on the labels at the
collision points. The edges that end in $p_i$ from one direction
have to be labeled by $k=1$ whereas the edges that end in $p_i$ from the other direction have to be labeled by $k=-1$.
However, the balancing condition at the trivalent vertices
plus the condition that all the other endpoint labels are zero,
means that the number of $k=1$ labels and the number of $k=-1$
labels at $p_i$ have to be equal. Therefore, we can pair up edges coming in from one direction with edges going out in the other, glueing these edges together pairwise. This
results in a new graph $G'\rightarrow X$ that is a BPS
state for the previous set of singularities $S_{i-1}$. 
\end{proof}

\noindent
{\bf Boundedness}---Let us mention a boundedness hypothesis
that will be useful. Suppose that there exists a compact
subset $K\subset X$ whose boundary $\partial K$ consists
of foliation lines, such that the corners of $\partial K$
are convex in the sense that each corner consists of 
foliation lines separated by two sectors inside $K$. And, we suppose that all ramification points of the spectral curve, and all caustics, are contained in the interior of $K$. Since the additional singularities were chosen on caustics, it follows that $S\subset K$. 

With this hypothesis, a spectral network graph has no collisions outside of $K$, and any edge that leaves $K$ continues as an infinite end. Indeed, all caustics are contained in
the interior of $K$, so $X-K$ has a flat structure modeled on 
the standard appartment $A$. Suppose $x\in X-K$ were a collision
point, say the first one outside of $K$.  Then the two incoming edges must exit from $K$, but two distinct 
foliation lines that exit from $K$ and meet, must exit from the
same edge because of the convexity hypothesis on the corners. 
If they exit from the same edge and meet, then they must meet at a $60^{\circ}$ angle, contradicting the hypothesis that they form incoming edges of a collision. Thus, such a collision cannot occur. 

Referring to the reduction steps that will be discussed later, any regions that are to be folded together have to stay inside $K$ by the same kind of considerations. Therefore, we may
follow our compact subset along into the sequence of constructions $Z$ that will occur; the compact subset will contain all modifications, and $X-K$ remains untouched as a flat space contained in each of our series of constructions. 

Although we don't discuss the question of convergence in the present paper, the existence of a series of compact subsets
that contain all the modifications means that one could envision an argument using decrease of the area to conclude termination of the reduction process. That would require bounding from below the size of the reduction steps.

\section{The initial construction}
\label{sec-initial}

The {\em initial construction} is a
construction $Z^{\rm init}$ obtained by a first process
of folding together certain regions in $X$ and trimming
away the resulting pieces. We consider a collection of
regions $Q_i\subset X$ covering the caustics, such that
$Q_i$ are bounded by foliation lines meeting in
two points $q_i,q'_i$ symmetrical with respect to the caustic.
In any $\phi$-harmonic map to a building
$h:X\rightarrow B$, $q_i$ and $q'_i$ have to map
to the same point, and indeed $h|_{Q_i}$ has to factor
through a map $\widetilde{Q}_i\rightarrow B$ where
$\widetilde{Q}_i$ is $Q_i$ folded in two 
along the caustic \cite{KNPS,KNPS2}.

Here is a picture of the regions $Q_i$  in $X$

\hspace*{1cm}
\includegraphics{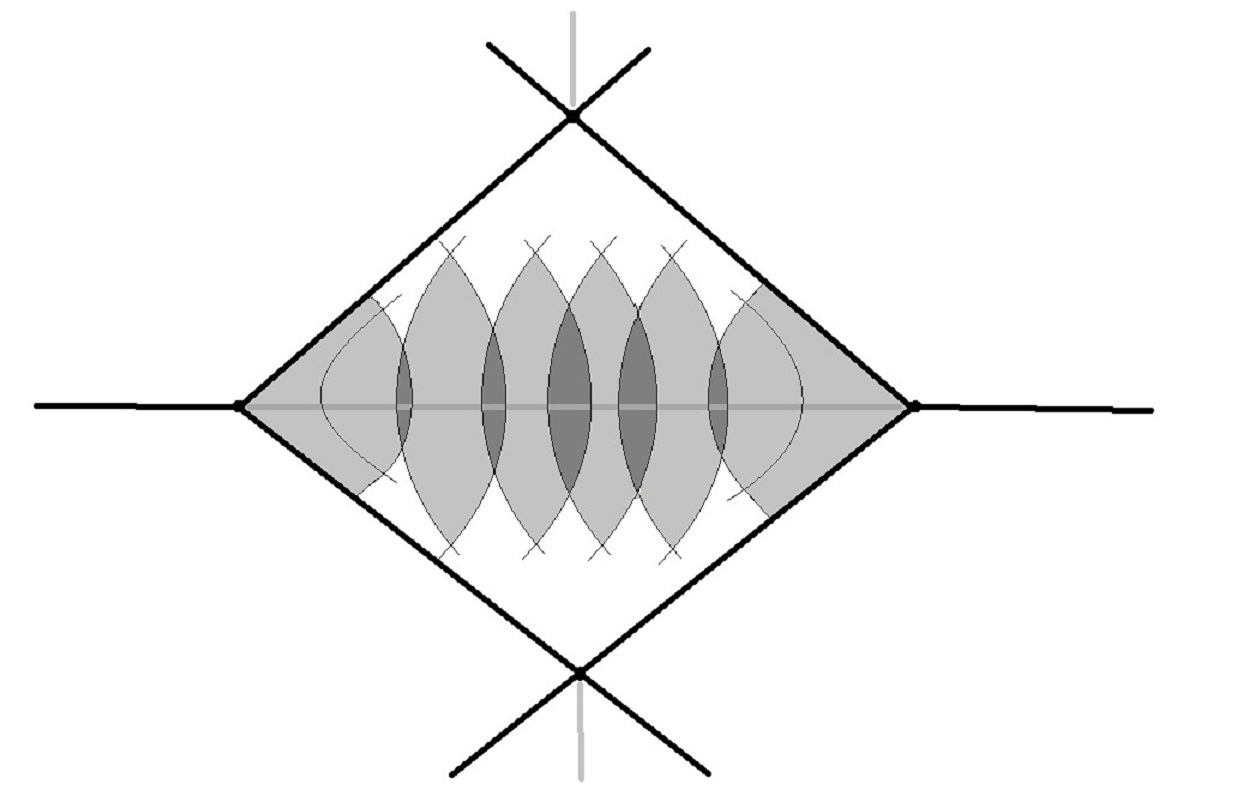}

and here is what their images look like in any harmonic
map to a building: 

\hspace*{1cm}
\includegraphics{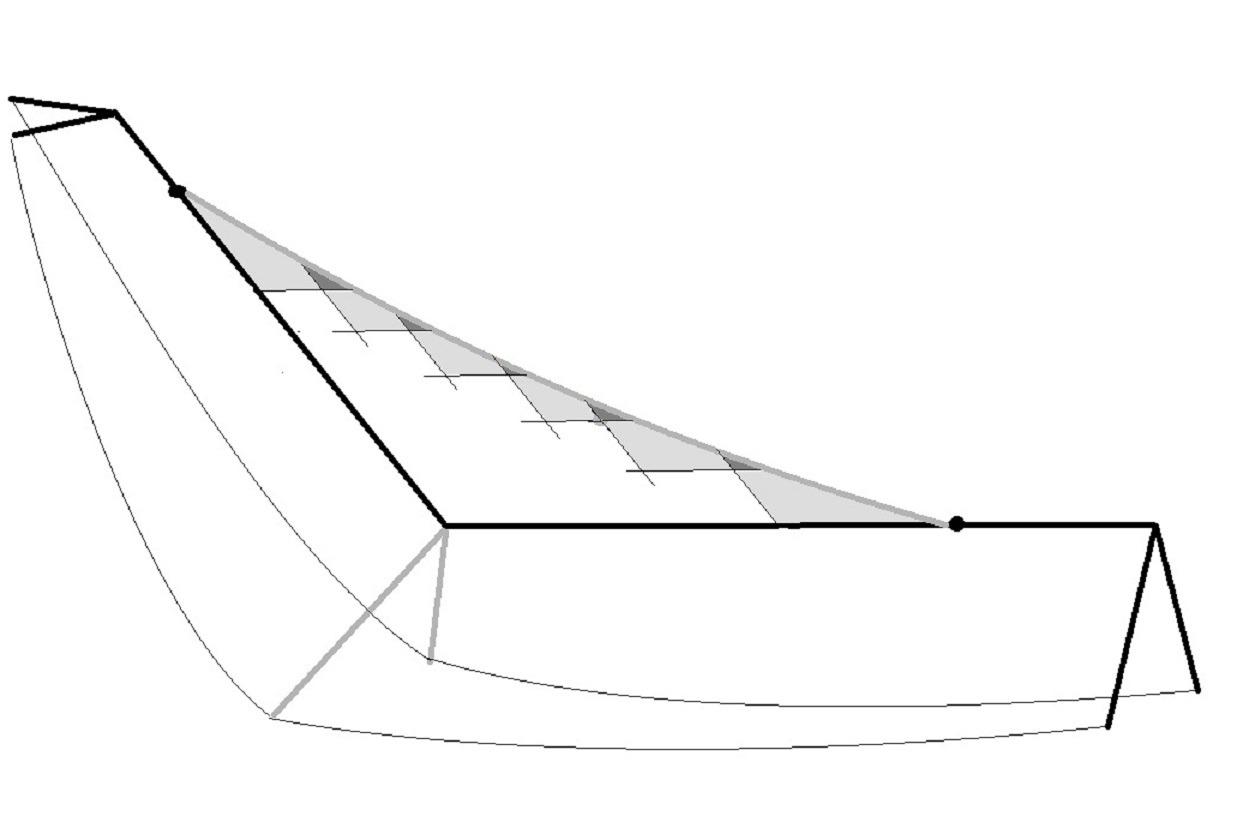}

We assume that the boundaries of the regions $Q_i$ are
formed by new spectral network lines gotten after adding
singularities along the caustics according to Proposition
\ref{addsn}. That will be used for the refracting property in
\S \ref{sec-refracting}.

Let $\widetilde{X}$ denote the quotient of $X$ by
the equivalence relation induced by the quotients
$Q_i\rightarrow \widetilde{Q}_i$. Now let $X^{\rm init}$
be the result of trimming off the folded-together
pieces, in other words it is the closure in $\widetilde{X}$
of $X-\bigcup Q_i$, or equivalently the image
of $X-\bigcup Q^{o}_i$. 

The space $X^{\rm  init}$ is a topological surface, 
and we have cut out the caustics. Thus, $X^{\rm init}$ is
provided with a geometric structure locally modeled on
the standard appartment $A$; the three foliation lines at any point correspond to the three standard directions in $A$. 
In particular $X^{\rm init}$ has a flat metric. The conformal
structure for this flat metric is different from the original
structure of Riemann surface on $X$. It has singular points of
positive and negative curvature, namely $8$-fold points
of negative curvature 
whose total angle is $480^{\circ}$ and $4$-fold points 
of positive curvature whose
total angle is $240^{\circ}$. 

View $X^{\rm init}$ as corresponding to a construction, i.e. a 
presheaf on the site of enclosures as discussed in
\cite{KNPS2}. 
More precisely we have a construction $Z^{\rm init}$ whose
usual set of points is $Z^{\rm init}(p)=X^{\rm init}$. 
It is provided with a map 
$$
h^{\rm init}:X-\bigcup Q^o_i \rightarrow Z^{\rm init}(p)
$$
that
sends foliation lines in $X$ to segments in $Z^{\rm init}$. 

This construction will be the starting point of our reduction process. It is provided with a {\em scaffolding} 
\cite{KNPS2}: certain
edges (the image of $\partial \bigcup Q^o_i$) 
are designated as ``fold edges'' while all the other
edge germs are designated as ``open'' or ``unfolded''. 
If $h:X\rightarrow B$ is any $\phi$-harmonic map to 
a building, then its restriction to $X-\bigcup Q^o_i$
factors through a map $Z^{\rm init}\rightarrow B$ that
respects the scaffolding, in the sense that fold edges are
folded and open edges are unfolded. 

\section{Structures}

Let $Z$ be a construction. We say that $Z$ is
{\em complete} if the associated metric space is complete.
It is {\em normal} if the link at any point is connected.  
We say that $Z$ is {\em ecarinate}
if, at any edge there are two half-planes.  
Notice that $Z^{\rm init}$ satisfies these conditions, and
our reduction process will conserve them, so
let us consider only complete normal ecarinate and simply connected constructions
$Z$.  In particular the set of usual points
$Z(p)$ is a complete $2$-manifold.

If $z\in Z(p)$ is a point then the link
$Z_z$ is a connected 
graph such that each vertex (corresponding to a germ of edge at $z$) is contained in two edges (corresponding to germs of sectors at $z$). Therefore, the link is a polygon.
By the parity property the polygon has an even number of edges. 
We assume that the number of edges  in the link at any point is $4$, $6$ or $8$. Most points are {\em flat}, meaning that their links are hexagons. The other points are called $4$-fold, or $8$-fold respectively. The $4$-fold points are points of positive curvature, and the $8$-fold points are points of negative curvature. 

A {\em scaffolding} consists of the following
structures:
\newline
(1)\;  A marking of each edge in $Z$ as either {\em open} (o) or {\em folded} (f), such that the fold edges are those
from a discrete
collection ${\bf F}$ of straight edges in $Z$. 
\newline
(2)\; An orientation assigned to each fold edge.
\newline
(3)\; A marking of some subset of the fold edges said to be {\em refracting}.

Our initial construction $Z^{\rm init}$ contains
a scaffolding in which all the edges are already
marked as refracting, see \S \ref{sec-refracting} below, 
and our reduction process will preserve
the refracting condition so we henceforth consider
only fully refracting scaffoldings. 

We assume that $Z^{\rm init}$ and its scaffolding
have the property that there exists a
harmonic mapping to a building such that the fold edges are
folded and the open edges are unfolded. This constrains the
local type of singularities. However, the number of possibilities is still rather large. 

We describe
here a list standard examples that will be sufficient for
our reduction process---the statement that we remain within this standard list is indeed one of the main conclusions of our 
treatment in the present paper. 

The notation will consist of a boldface number saying how many sectors there are, and a subscript saying how many folded lines in the scaffolding there are. Arrows in the pictures indicate
the orientations of the scaffolding edges. 

For example, 
points  of type $\six _0$ and $\eight_0$ are respectively 
$6$-fold and $8$-fold points with no adjoining fold edges.
A point of type $\six_2$ is a $6$-fold point with a single
straight fold edge (it is nonsingular); 
a $\six_3$ point has three folded edges
separated by $120^{\circ}$, and a $\six_4$ point is the same with an additional folded edge. These may be pictured as follows:
$$
\setlength{\unitlength}{.3mm}
\begin{picture}(100,100)

\qbezier(50,50)(50,50)(68,80)
\qbezier(50,50)(50,50)(68,20)
\qbezier(50,50)(50,50)(32,80)
\qbezier(50,50)(50,50)(32,20)

\linethickness{.8mm}
\qbezier(50,50)(50,50)(90,50)

\qbezier(50,50)(50,50)(10,50)
\thinlines

\qbezier(25,50)(25,50)(29,56)
\qbezier(25,50)(25,50)(29,44)

\qbezier(75,50)(75,50)(79,56)
\qbezier(75,50)(75,50)(79,44)

\put(5,15){$\six_2$}

\end{picture}
\hspace*{.8cm}
\setlength{\unitlength}{.3mm}
\begin{picture}(100,100)

\put(50,50){\circle*{5}}

\qbezier(50,50)(50,50)(68,80)
\qbezier(50,50)(50,50)(68,20)
\qbezier(50,50)(50,50)(10,50)

\linethickness{.6mm}

\qbezier(50,50)(50,50)(32,80)
\qbezier(50,50)(50,50)(32,20)

\linethickness{.8mm}
\qbezier(50,50)(50,50)(90,50)
\thinlines

\qbezier(41,65)(41,65)(46,73)
\qbezier(41,65)(41,65)(33,65)

\qbezier(41,35)(41,35)(49,36)
\qbezier(41,35)(41,35)(36,42)

\qbezier(75,50)(75,50)(79,56)
\qbezier(75,50)(75,50)(79,44)

\put(5,15){$\six_3$}

\end{picture}
\hspace*{.8cm}
\setlength{\unitlength}{.3mm}
\begin{picture}(100,100)

\put(50,50){\circle*{5}}

\qbezier(50,50)(50,50)(68,80)
\qbezier(50,50)(50,50)(68,20)

\linethickness{.6mm}

\qbezier(50,50)(50,50)(32,80)
\qbezier(50,50)(50,50)(32,20)

\linethickness{.8mm}
\qbezier(50,50)(50,50)(90,50)
\qbezier(50,50)(50,50)(10,50)
\thinlines

\qbezier(41,65)(41,65)(46,73)
\qbezier(41,65)(41,65)(33,65)

\qbezier(41,35)(41,35)(49,36)
\qbezier(41,35)(41,35)(36,42)

\qbezier(75,50)(75,50)(79,56)
\qbezier(75,50)(75,50)(79,44)

\qbezier(25,50)(25,50)(29,56)
\qbezier(25,50)(25,50)(29,44)

\put(5,15){$\six_4$}

\end{picture}
$$

Next we picture the $8$-fold and $4$-fold points. Note
that the pictures cannot be conformally correct for the 
angles; all drawn sectors represent sectors of $60^{\circ}$
in $Z$. 

The $8$-fold points have at most $2$ fold edges; and if
there are $2$ of them, they are separated by either $1$ or
$4$ sectors. The fold edge orientations go outward,
so after the $\eight_0$ picture there are three possibilities:
$$
\setlength{\unitlength}{.3mm}
\begin{picture}(100,100)

\put(50,50){\circle*{5}}

\qbezier(50,50)(50,50)(50,90)
\qbezier(50,50)(50,50)(50,10)

\qbezier(50,50)(50,50)(10,50)

\qbezier(50,50)(50,50)(80,80)
\qbezier(50,50)(50,50)(80,20)
\qbezier(50,50)(50,50)(20,80)
\qbezier(50,50)(50,50)(22,22)

\linethickness{.8mm}
\qbezier(50,50)(50,50)(90,50)
\thinlines

\qbezier(75,50)(75,50)(71,56)
\qbezier(75,50)(75,50)(71,44)

\put(5,15){$\eight_1$}

\end{picture}
\hspace*{.8cm}
\setlength{\unitlength}{.3mm}
\begin{picture}(100,100)

\put(50,50){\circle*{5}}

\qbezier(50,50)(50,50)(50,90)
\qbezier(50,50)(50,50)(50,10)

\qbezier(50,50)(50,50)(10,50)

\qbezier(50,50)(50,50)(80,80)
\qbezier(50,50)(50,50)(80,20)
\qbezier(50,50)(50,50)(20,80)
\qbezier(50,50)(50,50)(22,22)

\linethickness{.8mm}
\qbezier(50,50)(50,50)(90,50)
\qbezier(50,50)(50,50)(10,50)
\thinlines

\qbezier(75,50)(75,50)(71,56)
\qbezier(75,50)(75,50)(71,44)

\qbezier(25,50)(25,50)(29,56)
\qbezier(25,50)(25,50)(29,44)

\put(5,15){$\eight_2$}

\end{picture}
\hspace*{.8cm}
\setlength{\unitlength}{.3mm}
\begin{picture}(100,100)

\put(50,50){\circle*{5}}

\qbezier(50,50)(50,50)(50,90)
\qbezier(50,50)(50,50)(50,10)

\qbezier(50,50)(50,50)(10,50)

\qbezier(50,50)(50,50)(80,80)
\qbezier(50,50)(50,50)(80,20)
\qbezier(50,50)(50,50)(20,80)
\qbezier(50,50)(50,50)(22,22)

\linethickness{.8mm}
\qbezier(50,50)(50,50)(90,50)

\linethickness{.6mm}
\qbezier(50,50)(50,50)(80,80)
\thinlines

\qbezier(75,50)(75,50)(71,56)
\qbezier(75,50)(75,50)(71,44)

\qbezier(71,71)(71,71)(70,61)
\qbezier(71,71)(71,71)(61,70)

\put(5,15){$\eight'_2$}

\end{picture}
$$

Recall \cite{KNPS2} that
at a $4$-fold point, at least two of the four edges are folded,
and if an edge is folded then so is the opposite one. 
Our standard type is when only two edges are folded, 
and the edge orientations are inwards towards the
singularity, so it has the following picture:
$$
\setlength{\unitlength}{.3mm}
\begin{picture}(100,100)

\put(50,50){\circle*{5}}

\qbezier(50,50)(50,50)(50,87)
\qbezier(50,50)(50,50)(50,13)

\linethickness{.8mm}
\qbezier(50,50)(50,50)(87,50)
\qbezier(50,50)(50,50)(13,50)
\thinlines

\qbezier(30,50)(30,50)(26,56)
\qbezier(30,50)(30,50)(26,44)

\qbezier(70,50)(70,50)(74,56)
\qbezier(70,50)(70,50)(74,44)

\put(5,15){$\four_2$}

\end{picture}
$$

\begin{definition}
\label{singdef}
We say that the singularities of the scaffolding
are
{\em initial} if at any point of $Z$
the picture is either $\six_0$ (a smooth point
not on the scaffolding), $\six_2$ (an interior point of an edge of the scaffolding), 
an $8$-fold point of the form 
$\eight_1,\eight_2,\eight'_2$, or a $4$-fold point 
$\four_2$. We say that the singularities of the scaffolding are 
{\em standard}
if at any point the local picture is one of the
ones we have described, namely:
$$
\six_0,\; \six_2,\; \six_3,\; \six_4,\; 
\eight_0, \; \eight_1,\; \eight _2,\; \eight'_2,  
\mbox{ or }\four_2 
$$
shown above. 
\end{definition}

Included in the above definitions are compatibility of the
orientations of the fold lines with the singularity
types as drawn in the pictures. Recall that
edges are oriented outward at $8$-fold points and inward at $4$-fold points. For initial scaffoldings we
obtain the following characterization: 

\begin{remark}
\label{postcaustic}
Given a scaffolding with initial singularities,
the collection of fold lines decomposes into a disjoint
union of piecewise linear curves called the {\em post-caustics},
whose endpoints are of type $\eight_1$, and along which the
$4$-fold points alternate with $8$-fold points. In particular, the number of $8$-fold points on a connected post-caustic
is $1$ more than the number of $4$-fold points. 
\end{remark}

\section{The refraction property}
\label{sec-refracting}

Using Proposition \ref{addsn} to add spectral network lines
along the boundaries of the regions $Q_i$ used to define
the initial construction, will give that the
scaffolding of the initial construction $Z^{\rm init}$
is a fully refracting scaffolding. The ``refracting properties'' with respect
to the spectral network are:
\newline
(R1)\;\; At singular points of the scaffolding, all unfolded
edges are initial edges of the spectral network; and
\newline
(R2)\;\; When a spectral network line crosses over a
fold edge, it can continue on the other side in either one of
the two available directions. 

Suppose we are given a construction $Z$ and
a fully refracting scaffolding. A {\em spectral network graph for the refracting
spectral network} is a map from a graph $G\rightarrow Z$
satisfying the following properties:
\newline
---edges of $G$ go to segments in $Z$ that
are straight except when they cross the fold edges; 
\newline
---edges do not go along fold edges of the scaffolding; 
\newline
---when edges 
cross over the fold edges they can ``refract'', that is to
say they go out
of the fold edge on the other side in either one of the two directions;
\newline
---trivalent vertices of the graph go to collisions
at $\six_0$ points i.e. points 
not on fold edges of the scaffolding; and
\newline
---endpoints of the
graph go to singular points of the scaffolding or
$\eight_0$ points of $Z$, with the adjoining edges going outward in any non-fold directions. 

Before choosing the regions $Q_i\subset X$ that will be
folded and trimmed to get $X^{\rm init}$, add points 
to the set of singularities of our spectral
network, using Proposition \ref{addsn}. Add these points along
the caustic curves at the places where the boundaries
of the $Q_i$ meet the caustics, so that the
boundary curves of the $Q_i$ become spectral network curves.
Notice furthermore that at the corners $q_i,q'_i$,
all three directions outward to the rest of $X$ are spectral
network lines, two from continuing the boundary curves and
the third by collision of the two boundary curves. 

Our above somewhat heuristic discussion leads to the following
principle. 

\begin{principle}
\label{nobps}
If the original spectral network of $X$ had no BPS states, then
the refracting spectral network of $Z^{\rm init}$ has no BPS states. 
\end{principle}

We also need to know something about the arrangement of
singularities of the scaffolding for $Z^{\rm init}$. 
We state this as another principle:

\begin{principle}
\label{chooseinitial}
The initial construction $Z^{\rm init}$ may be chosen to
have only initial singularities (Definition \ref{singdef}).
The fold edges
of the scaffolding are thus arranged into  
post-caustics as described in Remark \ref{postcaustic}. 
\end{principle}

We don't give here a formal justification for the possibility
of choosing the initial construction in this way, but note that
it is what happens in the pictures we have considered.
For example, 
see \cite{KNPS2} for a picture leading to an $\eight'_2$ point.

\section{Reduction}

This section begins the discussion of a step in the reduction
process. The first question is to show the existence
of a $\four_2$ point about which some collapsing can be done.

\begin{proposition}
\label{prop-noloops}
Suppose given a construction with a refracting scaffolding,
such that there are no BPS states in the resulting spectral
network. Suppose that the fold edges of the scaffolding are oriented and the singularities are all from our standard  list. 
Make a directed graph using the singularities as vertices,
except that we separate a $\six_4$ singularity into two
vertices. The edges of the graph are the fold edges with
their orientations; at a $\six_4$ point the
``spine'' (consisting of the two fold edges that are opposite) 
goes to one of the vertices and the other two edges
go to the other one. This directed graph has 
no directed loops. 
\end{proposition}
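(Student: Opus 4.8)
The plan is to argue by contradiction: a directed loop in this graph would produce a closed spectral network graph, i.e.\ a BPS state, contradicting the standing hypothesis. Before doing that I would pin down the in/out structure of each vertex type, so as to locate where a loop could conceivably pass.

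First I would record the degree of each vertex in the directed graph. Since the fold edges at an $8$-fold point ($\eight_1,\eight_2,\eight'_2$) are all oriented outward, these are sources (in-degree $0$); since the two fold edges at a $\four_2$ point are oriented inward, such a point is a sink (out-degree $0$). A $\six_2$ point carries a single fold edge passing straight through with consistent orientation, hence has in-degree and out-degree $1$. Reading off the oriented pictures, a $\six_3$ point has two of its three $120^{\circ}$-separated fold edges pointing inward and one pointing outward (in-degree $2$, out-degree $1$), while at a $\six_4$ the spine is a straight pass-through line, so its vertex has in- and out-degree $1$, and the remaining pair gives the second vertex with one edge in and one out. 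Consequently any directed loop must avoid all $8$-fold and all $\four_2$ vertices and visit only $\six_2,\six_3$ and the two $\six_4$-vertices; at each such vertex it enters along one fold edge and leaves along another, making a $120^{\circ}$ turn (or continuing straight, at a spine).

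The heart of the argument is to convert such a loop $\gamma=(e_1,\dots,e_n)$ into a spectral network graph. The key point is that at every turn of $\gamma$, i.e.\ at each $\six_3$ or non-spine $\six_4$ vertex, the refraction property (R1) guarantees that the unfolded edge there is an initial edge of the refracting spectral network, while (R2) lets a spectral network line cross the fold edges of $\gamma$ and continue on the prescribed side. I would therefore run a spectral network line shadowing $\gamma$ on the appropriate side: it travels alongside each $e_a$, refracts across the fold edge at each vertex according to (R2), and turns in step with $\gamma$, the turns being realized either as genuine refractions or as collisions at nearby $\six_0$ points. Because $\gamma$ is a closed chain whose orientation is respected at every vertex, the shadowing line closes up after one circuit and has no infinite end; every vertex of the resulting graph is a collision at a $\six_0$ point or an endpoint at a singular point, so it is a legitimate spectral network graph, and being compact it is a BPS state. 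This contradicts the hypothesis, and the proposition follows.

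The step I expect to be the main obstacle is the shadowing construction itself: one must check that the side on which the spectral network line runs is consistent all the way around, so that the orientations of the $e_a$ genuinely force the line to turn the correct way and to close up rather than spiral off to an infinite end, and that the local turn at each $\six_3$/$\six_4$ vertex is realized by an allowed refraction or a $\six_0$ collision rather than forcing the line to run along a fold edge (which is forbidden). This is exactly where the $\six_4$-splitting convention earns its keep: by separating the spine from the other pair it forbids a directed path from turning between the pass-through line and the slanted edges at a $\six_4$, so that directed loops correspond to honest closed fold-chains and hence to honest closed spectral network graphs. I would verify the consistency of the shadowing side by propagating the orientation conventions (outward at $8$-fold, inward at $\four_2$) along $\gamma$, treating the $\six_3$ turn as the model case and reducing $\six_2$ and $\six_4$ to it.
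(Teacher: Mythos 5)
Your proposal is correct and follows essentially the same route as the paper's (sketched) proof: both take a path parallel to the directed loop, just to one side, and observe that by the refracting and collision conditions, together with the initial SN lines emanating from the singularities along the loop, this closed shadowing path constitutes a compact spectral network graph, i.e.\ a BPS state, contradicting the hypothesis. Your preliminary in/out-degree analysis of the vertex types and your explicit flagging of the side-consistency issue are elaborations the paper omits (the degree count in fact reappears in the paper's Corollary \ref{existsfourtwo}), but they do not change the underlying argument.
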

\begin{proof} {\em (Sketch)}---Consider a path parallel to
a directed loop, just to one side of it. This will satisfy
the collision and refracting conditions, so taken together
with the appropriate initial SN lines coming from
singularities, it constitutes a BPS state. 
\end{proof}

\begin{corollary}
\label{existsfourtwo}
If there are no BPS states and if the set of fold lines in the 
scaffolding is nonempty, then there
must be a $\four_2$ point.
\end{corollary}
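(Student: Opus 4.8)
\emph{Proof proposal.} The plan is to read the conclusion off Proposition~\ref{prop-noloops} by locating a sink of the directed graph of fold edges and checking that such a sink can only be a $\four_2$ point. First I would record the global shape of the graph. Since the set of fold lines is nonempty, the directed graph $\Gamma$ built in Proposition~\ref{prop-noloops} (singularities as vertices, with each $\six_4$ separated into its spine vertex and its transverse vertex, and the oriented fold edges as directed edges) has at least one edge. By the boundedness discussion, every folded region, hence every fold edge and every singularity of the scaffolding, lies in the compact set $K$, so $\Gamma$ is a finite directed graph. Proposition~\ref{prop-noloops} tells us that $\Gamma$ has no directed loops.

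Next I would extract a sink. Starting from any directed edge and following the orientations forward, the absence of directed loops forces the resulting directed path never to repeat a vertex; since $\Gamma$ is finite, a maximal such path must terminate at a vertex $v$ having no outgoing edge. Because the path contains at least one edge, $v$ also has at least one incoming edge, so $v$ is a vertex of $\Gamma$ with positive in-degree and zero out-degree.

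It then remains to identify $v$ among the standard types, using the orientation conventions (outward at $8$-fold points, inward at $4$-fold points). At $\eight_1,\eight_2,\eight'_2$ all fold edges point outward, so these are sources and have positive out-degree; at a $\six_2$ the single fold line passes straight through, contributing one incoming and one outgoing germ; and at a $\six_3$, as well as at each of the two vertices into which a $\six_4$ is separated, at least one adjoining fold edge is outgoing. The only standard type all of whose fold edges are oriented inward is $\four_2$. Hence $v$ must be a $\four_2$ point, as desired.

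The main obstacle is the orientation bookkeeping in the last step: one must verify from the standard pictures that a $\six_3$, and neither vertex of a split $\six_4$, can be a sink, i.e. that these configurations always carry at least one outward fold edge, and that the prescribed splitting of the $\six_4$ spine from its transverse edges does not manufacture a spurious out-degree-zero vertex. The only other point to check carefully is the finiteness of $\Gamma$; here the hypothesis that all folded regions, and thus all fold edges, remain inside the compact set $K$ is exactly what is needed.
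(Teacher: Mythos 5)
Your proposal is correct and takes essentially the same route as the paper: acyclicity from Proposition~\ref{prop-noloops}, finiteness of the directed graph, extraction of an extremal vertex (the paper phrases this as a minimal element of the induced poset, you as the terminal vertex of a maximal directed path), and the observation that $\four_2$ is the only standard type with all fold edges oriented inward. The orientation bookkeeping you flag as the main obstacle is exactly the check the paper relies on as well, and its configurations (e.g.\ the fold-edge data in Tables~\ref{s8table}--\ref{t6table}) confirm that a $\six_3$ and both vertices of a split $\six_4$ always carry at least one outgoing fold edge.
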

\begin{proof}
In the directed graph described in 
Proposition \ref{prop-noloops}, our hypothesis that there
are no BPS states implies that there are 
no directed loops. Therefore, the orientations
of edges make the graph into a poset; since it is finite it has a minimal vertex. The only type of point on the scaffolding that has all fold edges pointed inward is a $\four_2$ point,
therefore there exists a $\four_2$ point.
\end{proof}

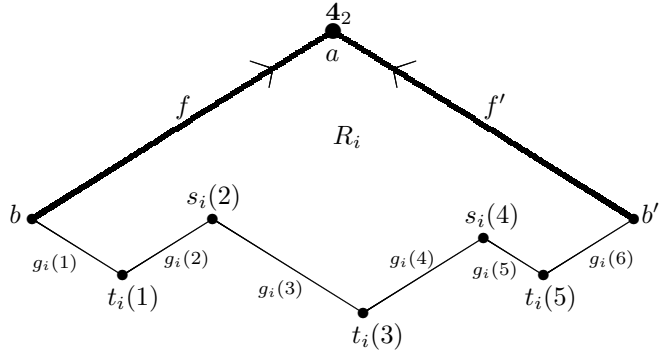
\begin{figure}
$$
\setlength{\unitlength}{.5mm}
\begin{picture}(200,110)

\put(20,50){\circle*{3}}
\put(180,50){\circle*{3}}
\put(100,100){\circle*{4}}

\linethickness{.6mm}
\qbezier(20,50)(20,50)(100,100)
\qbezier(180,50)(180,50)(100,100)
\thinlines

\qbezier(84,90)(84,90)(78,92)
\qbezier(84,90)(84,90)(83,85)

\qbezier(116,90)(116,90)(122,92)
\qbezier(116,90)(116,90)(117,85)

\put(44,35){\circle*{3}}
\qbezier(20,50)(20,50)(44,35)

\put(68,50){\circle*{3}}
\qbezier(44,35)(44,35)(68,50)

\put(108,25){\circle*{3}}
\qbezier(68,50)(68,50)(108,25)

\put(140,45){\circle*{3}}
\qbezier(108,25)(108,25)(140,45)

\put(156,35){\circle*{3}}
\qbezier(140,45)(140,45)(156,35)

\qbezier(156,35)(156,35)(180,50)

\put(98,103){$\four_2$}
\put(98,92){$a$}
\put(58,78){$f$}
\put(140,78){$f'$}

\put(14,49){$b$}
\put(182,49){$b'$}

\put(61,54){$s_i(2)$}
\put(135,49){$s_i(4)$}

\put(40,27){$t_i(1)$}
\put(105,17){$t_i(3)$}
\put(151,27){$t_i(5)$}

\put(20,37){$\scriptstyle g_i(1)$}
\put(55,38){$\scriptstyle g_i(2)$}
\put(80,30){$\scriptstyle g_i(3)$}
\put(115,38){$\scriptstyle g_i(4)$}
\put(137,35){$\scriptstyle g_i(5)$}
\put(168,38){$\scriptstyle g_i(6)$}

\put(100,70){$R_i$}

\end{picture}
$$
\vspace*{-1.4cm}
\caption{\label{mcr} Maximal collapsing region}
\end{figure}

We now consider an extended 
collapsing operation at a $\four _2$ point
$a$. 
For this, consider two pieces $R_1$ and $R_2$
that match up, and share a
common vertex $a$ and common edges $f$ and $f'$. Let $b$ (resp. $b'$) denote the endpoint of $f$ (resp. $f'$) different from 
$a$. 
The $R_i$ are assumed to be constructions that can be
considered as isomorphic to 
a subset $R$ in an abstract parallelogram $P$. 
Let us also label the corresponding vertex $a$ and the
corresponding edges $f,f'$ in $P$. The embeddings $R_i\cong R \subset P$ preserve $a,f,f'$. We can now describe the configuration of $R$: the vertex $a$ is an obtuse vertex of $P$, and $f,f'$ are the full edges of $P$ meeting $a$. 
Hence the points also labelled $b,b'\in P$ are the two
acute vertices. We assume that $R$ is a union of finitely many
sub-parallelograms $R(2j-1)\subset P$ (see the picture below for the numbering) such that $R(2j-1)$ contains $a$
as obtuse vertex, that is to say the edges of $R(2j-1)$ will be 
segments in $f,f'$ starting at $a$. Let $t(2j-1)$ denote the
vertices of $R(2j-1)$ opposite to $a$. We assume that the
order $R(1),R(3),\ldots , R(2k+1)$ is such that $t(1)$ is
on the same edge of $P$ as $b$, and they go in a consecutive sequence
until $t(2k+1)$ is on the same edge of $P$ as $b'$. 
The $t(1),t(3),\ldots , t(2k+1)$ are the other convex corners
of $R$ after $a,b,b'$. Let $s(2),s(4),\ldots , s(2k)$ denote the
concave corners of $R$ in between them, so that $s(2j)$ lies
between $t(2j-1)$ and $t(2j+1)$. 
Let $g(1)$ be the edge from $b$ to $t(1)$ and let 
$g(2k+2)$ be the edge from $t(2k+1)$ to $b'$. Let 
$g(2j)$ be the edge from $t(2j-1)$ to $s(2j)$ and
$g(2j+1)$ be the edge from $s(2j)$ to $t(2j+1)$. Thus,
$g(2j)$ and $f$ are parallel, and $g(2j-1)$ and $f'$ are parallel. 

Now consider the same points in the regions $R_i$, indicated
as $s_i(2j)$ and $t_i(2j-1)$, with edges $g_i(j)$.
The configuration of this maximal collapsing pair of regions 
may be pictured as in Figure \ref{mcr}.

We assume that $R$ is a maximal such region with corresponding
regions $R_i\subset Z$, such that the following conditions are satisfied:
\newline
---there are no singularities in the interior of $R_i$;
\newline
---the only singularities on the interior of the edges 
$f,f'$ are
$\six_4$ points where $f$ or $f'$ is the straight fold edge
(spine). 

\begin{proposition}
\label{properties}
Under the above maximality conditions, we have the following properties:
\newline
1. The vertices $b,b'$ are singularities, and furthermore
these singularities are not $\six_4$ points with edge 
$f$ or $f'$ on the spine;
\newline
2. For each $j$ there is exactly one of the $s_1(2j),s_2(2j)$ that
is an $8$-fold singularity, and the other is nonsingular;
\newline
3. There are no $8$-fold singularities in the interiors
of the edges $g_i(j)$;
\newline
4. If $q_1$ 
is a $4$-fold or singular 
$6$-fold point on an edge $g_1(j)$ of $R_1$ 
then the corresponding point $q_2$ on $R_2$ is not a singularity,
and vice-versa, and this also applies for corners $t_i(2j-1)$. 
\end{proposition}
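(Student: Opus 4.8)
The plan is to reduce everything to the single piece of rigidity in the setup: both sheets are isometric to one flat model, $R_1\cong R\cong R_2$, via isometries preserving $a,f,f'$, and the collapse glues the two sheets along this identification. Throughout I would fix a $\phi$-harmonic map $h\colon Z\to B$ to a building that folds the fold edges and leaves open edges unfolded (such an $h$ exists by our standing hypothesis on $Z$ and its scaffolding). Since the collapse folds $R_1$ onto $R_2$, the map $h$ sends corresponding points of the two sheets to one image point of $B$, and $B$ is non-positively curved. The proof is then a bookkeeping of total angles and fold-edge data at the four kinds of boundary features of $R$---the acute corners $b,b'$, the concave corners $s(2j)$, the edge interiors, and the convex corners $t_i(2j-1)$---governed by \emph{maximality} and by a \emph{no-overlap principle} asserting that two corresponding boundary points cannot both be singular.

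Part~1 is maximality. The edge $f$ terminates at $b$, and along its interior only $\six_4$ points with $f$ on the spine are allowed. If $b$ were nonsingular, or a $\six_4$ point with $f$ on the spine, then $f$ would continue straight through $b$ and the flat model $R$ could be enlarged along $f$ while preserving $R_1\cong R\cong R_2$, contradicting maximality; the same applies at $b'$. Hence $b,b'$ are singular and are not of the excluded type. Part~2 is the angle balance at a concave corner. In the flat model the interior angle of $R$ at $s(2j)$ is the reflex $240^\circ$, bounded by the edges $g(2j)\parallel f$ and $g(2j+1)\parallel f'$; writing $\beta_i$ for the angle of $Z$ exterior to $R_i$ there, the total angle at $s_i(2j)$ is $240^\circ+\beta_i$, so $\beta_i=120^\circ$ when $s_i(2j)$ is nonsingular and $\beta_i=240^\circ$ when it is $8$-fold (the cases $\four_2$ and $\six_3,\six_4$ being excluded because they would force an extra edge into the interior of $R_i$, where there are none). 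Folding the two sheets identifies the reflex wedges and recombines the exterior wedges along the new fold edges $g(2j),g(2j+1)$; I would show the image angle equals $240^\circ+\abs{\beta_1-\beta_2}$. Non-positive curvature of $B$ forbids the value $240^\circ$, ruling out $\beta_1=\beta_2$, so exactly one $\beta_i$ equals $240^\circ$; this is the $8$-fold corner, and the no-overlap principle forces its partner to be genuinely nonsingular, producing a regular $360^\circ$ image. This is precisely part~2.

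Parts 3 and 4 are two further applications of the no-overlap principle, which I would isolate as the technical core. Suppose, on an edge interior or at a convex corner $t_i(2j-1)$, that the corresponding points $q_1\in R_1$ and $q_2\in R_2$ were \emph{both} singular. Since $h$ identifies them, the fold-and-angle data at the image would be the superposition of two singular local pictures, and I would derive a contradiction in one of two ways: either the superposition leaves the standard list of Definition~\ref{singdef} (too large an angle, or too many fold edges meeting the image), or---more robustly---one builds from it a closed chain of refracting spectral-network lines that runs out of $q_1$, refracts across the intervening fold edges, and returns through $q_2$, producing a BPS state and contradicting the no-BPS hypothesis exactly as in the proof of Proposition~\ref{addsn}. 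Part~3 is the specialization to an interior $8$-fold point of $g_i(j)$: such a point keeps the edge straight inside $R$ but carries the surplus $300^\circ$ of an $8$-fold vertex, and its partner cannot absorb this without over-angling the image, so it is excluded outright; part~4 is the general ``not both singular'' assertion, symmetric in the two sheets.

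The step I expect to be the main obstacle is making the no-overlap principle rigorous, and with it the precise recombination of the exterior wedges used in Part~2. One must pin down exactly how the two half-planes at each newly created fold edge are reattached after the collapse, and how a refracting spectral-network line is to be continued across those edges, using the refraction rule (R2); it is only through this continuation and the absence of BPS states that the ``both singular'' configurations are genuinely ruled out rather than merely shown to be atypical. The remaining parts are then bookkeeping against the standard list.
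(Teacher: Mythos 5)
Your skeleton---maximality for part 1 and a ``no-overlap principle'' driving parts 2--4---is the same as the paper's, but the technical core of that principle is precisely what you leave open, and it is not a routine verification. The paper's proof begins with one key observation that your proposal never supplies: a fold edge cannot enter $R_i$ from a boundary point in the middle (``vertical'') direction, because such a fold line would either run into the vertex $a$, forcing it to be a $\four_4$ point (not in the standard list), or meet $f$ or $f'$ at a $\six_4$ point with an incompatible orientation. This is what makes the BPS mechanism work: two corresponding points $q_1\in R_1$, $q_2\in R_2$ are joined by a common foliation line in this vertical direction, refracting once across $f$ or $f'$; since that line cannot be a fold line, if both endpoints were singular it would, by (R1), be an initial SN-line at both ends, i.e.\ a compact spectral network graph---a BPS state. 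Without this exclusion, the ``closed chain of refracting SN lines'' you invoke could simply fail to exist, because the connecting line might itself be folded. You correctly flag this as the main obstacle, but flagging it is not closing it; the whole proposition rests on it.

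The angle and curvature arguments you substitute in parts 2 and 3 do not work. First, non-positive curvature of $B$ does not forbid an image angle of $240^{\circ}$: in a building, two apartments can share a $240^{\circ}$ sector at a point and branch along its boundary rays, so the configuration ``both $s_i(2j)$ nonsingular'' is perfectly compatible with the existence of $h$---indeed it is locally realizable by taking a non-maximal region. It is excluded only by maximality (the paper: if both points are nonsingular, the regions can be enlarged), not by any curvature obstruction. Second, your exclusion of $\six_3$/$\six_4$ at $s_i(2j)$ ``because they would force an extra edge into the interior of $R_i$, where there are none'' misreads the hypotheses: the maximality conditions forbid \emph{singularities} in the interior, not fold edges; a fold line may cross $R_i$ through nonsingular $\six_2$ points and reflect off $f$ or $f'$ at $\six_4$ points, which the hypotheses explicitly allow. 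The paper's actual argument is that the two fold lines emanating from a singular $6$-fold $s_1(2j)$ parallel to $f$ and $f'$ reflect at $f'$ and $f$ and meet again at the corresponding point $s_2(2j)$, forcing it to be singular and contradicting the no-overlap principle. Third, in part 3 there is no ``over-angling'': trimming and regluing a $300^{\circ}$ exterior wedge (from an interior $8$-fold point of $g_1(j)$) with the $180^{\circ}$ exterior wedge opposite yields $480^{\circ}$, a perfectly admissible $\eight$-fold point. The paper excludes interior $8$-fold points again via spectral network lines: such a point must emit a fold line or an SN-line, and this is incompatible (through a BPS state) with the $8$-fold singularity at the corner produced by part 2. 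In short, every case of this proposition is ultimately policed by the no-BPS hypothesis together with maximality, and the curvature of $B$ plays no role.
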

\begin{proof}
Notice in general that there can't be a fold edge going from 
a point on one of the edges $g_i(j)$ into $R_i$
in the middle direction between the directions of $f$ and $f'$
(vertical in Figure \ref{mcr});
that would have to meet the $4$-fold point making it of type
$\four _4$, or meet an edge in a $\six_4$ point but 
oriented in the wrong direction. 

It follows that two corresponding points $q_1\in R_1$ and $q_2\in R_2$ can't both be singularities, for they are joined
by a common foliation line that refracts at an edge $f,f'$;
we have seen that it can't be a fold line so it would be an initial SN-line from both singularities resulting in a BPS state. We get 4 and part of 2.

For 1, if $b$ (resp. $b'$) were nonsingular, or a 
$\six_4$ point
with edge $f$ (resp. $f'$) along 
the spine, then one could continue the regions 
$R_i$ further along the corresponding edge. 

For 2, if both points
are nonsingular then the regions are not maximal, so one must be singular, say on $R_1$. 
However, if it were a $6$-fold point then both lines
going into $R_1$ parallel to $f$ and $f'$ would be fold lines;
these would have to meet $f$ and $f'$ in $\six_3$ or $\six_4$ points, resulting in fold lines going back in the opposite direction in $R_2$. These two would meet in the corresponding point of $R_2$ but that would have to be singular, contradicting the statement above. 

For 3, notice that if some segments of one of these edges are folded, then the orientations of the fold segments are all the same (it follows from the consideration of the first paragraph).
But if we had an $8$-fold point on the interior of one of these edges, it would send out either a fold line or an SN-line, and
that would contradict the existence of the $8$-fold singularity given in part 2. 
\end{proof}

We now consider how fold lines, SN lines and singularities
can be arranged along a pair of edges $g_1(j),
g_2(j)$. Each edge $g_i(j)$ 
has two endpoints,
$q_i^{(0)}$ that is either 
$s_i(j'')$
or $b,b'$, and $q_i^{(m_{ij}+1)}=t_i(j')$.

\begin{lemma}
\label{onesn}
One of the two edges $g_1(j)$ or $g_2(j)$ is an SN line, 
and has
no singularities in its interior (so $m_{ij}=0$),
or at the endpoint $q_i^{(1)}$. 
It points towards the endpoint $q_i^{(1)}=t_i(j')$. 
\end{lemma}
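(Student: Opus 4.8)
The idea is that each $g_i(j)$ is a foliation segment joining a convex corner $t_i(j')=q_i^{(m_{ij}+1)}$ to a source endpoint $q_i^{(0)}$, the latter being either an acute vertex $b,b'$ or a concave corner $s_i(j'')$; I would locate the spectral network line at the source and then use the no-BPS hypothesis to show that it runs cleanly to $t_i(j')$. First, Proposition \ref{properties} tells us that the source is singular on at least one copy: by part~1 the acute vertices $b,b'$ are singular, and by part~2 exactly one of $s_1(2j),s_2(2j)$ is an $8$-fold point. I would mark the copy on which the source singularity lies (for the concave case, the $8$-fold one) and show that $g_i(j)$ is the asserted clean SN line on that copy, the fold data being carried by the other copy in accordance with part~4.

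On the marked copy, $g_i(j)$ is an open germ at the singular source---the fold germs there being accounted for by the spine through that point---so by the refraction property (R1) an unfolded edge at a singular point of the scaffolding is an initial spectral network line. Thus $g_i(j)$ is an SN line emanating outward from $q_i^{(0)}$, which gives both the SN-line assertion and the orientation toward $t_i(j')=q_i^{(1)}$.

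It remains to verify cleanness: $m_{ij}=0$ and $t_i(j')$ nonsingular on the marked copy. Part~3 of Proposition \ref{properties} already excludes interior $8$-fold points. To exclude an interior $\four_2$ point or a singular $6$-fold ($\six_3,\six_4$) point I would argue exactly as in the opening paragraphs of the proof of Proposition \ref{properties}: such a point cannot emit a fold line into the middle (vertical, in Figure \ref{mcr}) direction, so it must emit an SN line transverse to $g_i(j)$; carrying that SN line across the fold edge $f$ or $f'$ by refraction into $R_{3-i}$ and joining it to the initial SN line along $g_i(j)$ and to the SN lines issuing from the source produces a compact spectral network graph, i.e.\ a BPS state, contrary to hypothesis. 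The same matching principle---that corresponding points of $R_1$ and $R_2$ cannot both be singular, part~4 of Proposition \ref{properties}---then forces $t_i(j')$ to be nonsingular on the marked copy.

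The point I expect to be the main obstacle, and which I would present only as a sketch in keeping with the rest of the paper, is the actual closing-up of the BPS state from a hypothetical interior singularity: one must follow the transverse SN line, choose consistently among the two refraction directions available at $f$ and $f'$, and check that it returns to an endpoint so as to form a genuine compact graph. A secondary subtlety is the selection of the marked copy for the boundary edges $g(1)$ and $g(2k+2)$, whose source $b$ (resp.\ $b'$) is common to $R_1$ and $R_2$; there the clean copy is singled out by part~4 at the corner $t(1)$ (resp.\ $t(2k+1)$) together with the complementary distribution of interior singularities between the two copies.
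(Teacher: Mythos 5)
Your proposal breaks down at its central claim: that the SN line always lies on the ``marked copy'', i.e.\ the copy $R_i$ whose source point $q_i^{(0)}$ is the singularity. You justify the unfoldedness of the germ of $g_i(j)$ at the singular source by saying the fold germs there are ``accounted for by the spine through that point'', but $8$-fold points have no spine (only $\six_4$ points do), and nothing prevents an admissible $8$-fold point of type $\eight_1$, $\eight_2$ or $\eight'_2$ from having a fold edge along the germ of $g_i(j)$ itself; this case genuinely occurs (it is what produces the folded segments analyzed in the lemma that follows, and it appears in Table \ref{s8table}, e.g.\ the row $\eight_1$, $\dxb_1$). The paper's proof handles exactly this: since no admissible $8$-fold type has two fold edges separated by three sectors, at least one of the two edges so separated at the $8$-fold source --- the germ of $g_i(j)$, or the edge going into the interior of $R_i$ in the opposite direction --- is unfolded, hence an initial SN line by (R1). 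If it is $g_i(j)$, one concludes on that copy; if instead $g_i(j)$ is folded there, the interior SN line refracts at $f$ or $f'$ by (R2), returns in the \emph{other} region $R_{i'}$, and eventually joins $g_{i'}(j)$, so the clean SN edge is on the copy \emph{opposite} to the singular source. Your argument has no mechanism for this second case, and in it your conclusion about the marked copy is simply false.

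There is a second, smaller gap: you deduce that $t_i(j')$ is nonsingular on the marked copy from part 4 of Proposition \ref{properties}, but part 4 only says that corresponding corners cannot \emph{both} be singular; it does not tell you which one is nonsingular, so it cannot single out your chosen copy. The correct argument --- which also disposes of interior singularities without your transverse-refraction construction and its admittedly problematic ``closing-up'' --- is the one the paper uses: once $g_i(j)$ is known to be an SN line emanating from a singular point, any singularity in its interior or at its far endpoint would make the segment joining the two singular points a compact spectral network graph, i.e.\ a BPS state, contrary to hypothesis.
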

\begin{proof}
One of the two $q_i^{(0)}$ 
is an $8$-fold point. At our admissible
possibilities $\eight _0,\eight_1,\eight_2,\eight'_2$ there are never two fold edges separated by three sectors. Therefore, at this point either the outgoing edge along $g_i(j)$, 
or the edge that goes into $R_i$ in the opposite direction 
(that is to say separated by three sectors interior 
to $R_i$), are SN-lines. If $g_i(j)$ 
is an SN-line then we obtain the desired conclusion. Notably, there are no singular points along this edge otherwise that would create a BPS state. In the other case, the SN line reflects at $f$ or $f'$ and  comes back in the
other region $R_{i'}$, eventually joining the edge $g_{i'}(j)$
so that edge is an SN line. Again in that case it has no singularities. The SN line points away from 
$q_i^{(0)}$ in both cases, so towards $q_i^{(1)}$. 
\end{proof}

On the edge $g_i(j)$ 
not concerned by the above lemma, 
let $q_i^{(1)},\ldots , q_i^{(m_{ij})}$ 
denote the singularities in order 
along the interior of $g_i(j)$, starting from the
nearest to $q_i^{(0)}$ (recall that was $s_i(j'')$ or $b,b'$). 
The internal singularities are $4$ or $6$-fold points.

\begin{lemma}
All the segments $q_i^{(u-1)}q_i^{(u)}$ are fold
lines for $1\leq u\leq m_{ij}$. Furthermore, 
for any $1\leq u <m_{ij}$ the singularity $q_i^{(u)}$ 
is a $\six_4$ point with the connecting segments of the edge $g_i$ being its spine. These are oriented in the direction away from $q_i^{(0)}$.
\end{lemma}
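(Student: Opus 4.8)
My plan is to prove both assertions simultaneously by induction along $g_i(j)$, starting at the near endpoint $q_i^{(0)}$ and propagating in the direction of increasing $u$. Fix the orientation of $g_i(j)$ so that $+d$ points away from $q_i^{(0)}$ (toward $t_i(j')=q_i^{(m_{ij}+1)}$), and write $+m$ for the middle foliation direction---the ``vertical'' direction of Figure~\ref{mcr}---pointing from $g_i(j)$ into $R_i$. The first paragraph of the proof of Proposition~\ref{properties} supplies the constraint I will lean on throughout: no fold edge may leave a point of $g_i(j)$ into $R_i$ in the direction $+m$. The inductive hypothesis carried to $q_i^{(u)}$ is that the segment $q_i^{(u-1)}q_i^{(u)}$ is a fold line oriented in the $+d$ direction, so that its terminal fold germ arrives at $q_i^{(u)}$ pointing inward.

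The core is the local classification at an interior singularity $q_i^{(u)}$, which is $4$- or $6$-fold by part~3 of Proposition~\ref{properties} and carries a fold germ along $-d$. A singular $6$-fold point with a fold line running straight through it along $g_i(j)$ must be a $\six_4$ (it cannot be $\six_3$, which has no pair of opposite fold edges). Its two extra fold germs sit at $\pm 60^{\circ}$ from one end of the spine; the no-middle-fold constraint forces them to the $+d$ end, since placing them at the $-d$ end would put one extra germ in the forbidden direction $+m$. Hence the spine is oriented in the $+d$ direction, away from $q_i^{(0)}$---precisely the asserted conclusion---and the two open germs fall at $+m$ and at the remaining outgoing foliation direction, the former being an initial SN-line by (R1). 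The competing flat configurations, namely $\six_3$ or a $\six_4$ whose spine is transverse to $g_i(j)$, leave the germ along $+d$ open rather than folded, so there the fold line does not continue but turns.

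It then remains to show that a turn, or a $\four_2$, can occur only at the final interior singularity. If the germ of $g_i(j)$ in the direction $+d$ at $q_i^{(u)}$ is open, it is an SN-line; were there a further interior singularity $q_i^{(u+1)}$, the straight open segment $q_i^{(u)}q_i^{(u+1)}$ would be an edge of the refracting spectral network joining two singular endpoints, hence a compact graph, i.e.\ a BPS state, contrary to hypothesis. Thus a turning point is the last one, $u=m_{ij}$. For $\four_2$ I use orientations: the $\six_4$ spines splice $q_i^{(0)},q_i^{(1)},\dots$ into one fold line coherently oriented in the $+d$ direction, whereas the two fold germs at a $\four_2$ both point inward; a $\four_2$ is therefore a sink for this orientation and can only terminate the chain. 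For the base case I would examine $q_i^{(0)}$ directly: when it is the $8$-fold corner $s_i(j'')$, the admissible types $\eight_0,\eight_1,\eight_2,\eight'_2$ together with the dichotomy used in Lemma~\ref{onesn} force the germ along $g_i(j)$ on the non-SN side to be a fold oriented outward, i.e.\ in $+d$; the cases $q_i^{(0)}=b,b'$ are covered by part~1 of Proposition~\ref{properties}. Assembling the induction yields that every segment $q_i^{(u-1)}q_i^{(u)}$ with $1\le u\le m_{ij}$ is a fold line and that every $q_i^{(u)}$ with $u<m_{ij}$ is a $\six_4$ with spine along $g_i(j)$ oriented away from $q_i^{(0)}$.

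The step I expect to be the main obstacle is the orientation bookkeeping in the middle paragraph: one must verify scrupulously that, once the spine is forced along $g_i(j)$ and the extra folds forced to the far end by the no-middle-fold constraint, the resulting spine orientation is genuinely $+d$ and remains consistent all along the chain, so that the $\four_2$ sink and the SN-turn really are mutually exclusive with an interior $\six_4$. A secondary delicacy is the angle (curvature) accounting at $q_i^{(m_{ij})}$ that permits a $\four_2$ there---with $R_i$ flat on its side but only $60^{\circ}$ of $Z$ beyond---while the same accounting, combined with part~3, excludes $8$-fold and transverse-fold pathologies in the interior.
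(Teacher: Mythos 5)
Your induction is set up so that everything hinges on the base case, and that is exactly where there is a genuine gap. You treat only two possibilities for $q_i^{(0)}$: that it is the $8$-fold corner $s_i(j'')$, or that it is $b,b'$. But by part 2 of Proposition~\ref{properties} exactly one of the pair $s_1(j''),s_2(j'')$ is singular, and the dichotomy in the proof of Lemma~\ref{onesn} allows either branch: when the SN edge produced by Lemma~\ref{onesn} is the one emanating from the $8$-fold point itself, the edge carrying the interior singularities starts at the \emph{nonsingular} member of the pair. In that case $q_i^{(0)}$ is a flat point, there is no local scaffolding structure there that could force the germ of $g_i(j)$ to be folded, let alone oriented outward, and your induction never gets started. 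The paper closes this case with a nonlocal argument that your proposal lacks: if the first segment were open, it would be an SN line emanating from the singularity $q_i^{(1)}$ (by (R1)); since $q_i^{(0)}$ is nonsingular the line does not stop there, but continues through it to one of the edges $f,f'$, refracts, and hits the corresponding point $s_{i'}(j'')$ on the other piece, which \emph{is} singular by part 2 of Proposition~\ref{properties} --- producing a compact spectral network graph, i.e.\ a BPS state. The same reflection mechanism is what supplies the orientation seed in this case.

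A secondary imprecision, fixable with tools you already use: even when $q_i^{(0)}$ is the $8$-fold corner, the ``dichotomy of Lemma~\ref{onesn} plus the admissible types'' does not by itself force the germ of $g_i(j)$ there to be a fold --- at an $\eight_1$ point, for instance, seven of the eight germs are open, and nothing purely local prevents $g_i(j)$ from being one of them. What forces the fold is again the BPS exclusion: an open germ at a singular point is an initial SN line by (R1), and it would run straight into the singularity $q_i^{(1)}$. You invoke precisely this argument for the interior segments, so the repair is to apply it uniformly; that is in fact how the paper proceeds --- it first shows \emph{all} segments $q_i^{(u-1)}q_i^{(u)}$, $1\le u\le m_{ij}$, are folds by BPS exclusion (with the refraction detour covering the nonsingular-$q_i^{(0)}$ case), and only afterwards classifies the interior singularities as $\six_4$ with spine along $g_i(j)$ and propagates the orientation outward from $q_i^{(0)}q_i^{(1)}$. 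Your local classification of the interior points (the no-middle-fold constraint placing the extra fold germs at the far end) and your orientation propagation are consistent with that part of the paper's argument.
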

\begin{proof}
If any of these segments were SN lines that would create a BPS state. Note that if $q_i^{(0)}$ is not the singular
one of the two points $s_1(j''),s_2(j'')$, then an SN line from $q_i^{(1)}$ to $q_i^{(0)}$ 
would continue to one of the edges $f,f'$ and reflect and hit the corresponding point $s_{i'}(j')$ on the other piece,
that is then a singularity by part 2 of Proposition
\ref{properties}.

If $1\leq u <m_i$ then both segments 
$q_i^{(u-1)}q_i^{(u)}$
and $q_i^{(u)}q_i^{(u+1)}$ are fold edges. 
The only type of singularity with two fold edges 
separated by three sectors is
a $\six _4$ point and these edges are the spine. For the orientation, note that all the segments from $q_i^{(0)}$ 
up to $q_i^{(u+1)}$ are fold edges, and 
$q_i^{(0)}q_i^{(1)}$ is oriented outwards 
from $q_i^{(0)}$. 
By the rule for orientations at $\six_4$
it follows inductively that all the segments 
are oriented the same way. 
\end{proof}

When we do our glue and trim operation, the combination of 
any edge with a folded edge has the same marking as the first edge. So, for the problem of deciding which edges are in the scaffolding, the only indeterminacy is whether the edge 
$q_i^{(m_{ij})}q_i^{(m_{ij}+1)}$ 
is a fold edge or an SN edge. 
This could mean the entire edge $g_i(j)$ if $m_{ij}=0$. 

The following lemma says that the spectral network or fold lines created in the glued and trimmed construction will satisfy the refracting property. 

\begin{lemma}
\label{newrefract}
If the edge $q_i^{(m_{ij})}q_i^{(m_{ij}+1)}$ 
is an SN line, then it points
in the direction towards the endpoint 
$q_i^{(m_{ij}+1)}=t_i(j')$. 
As a corollary, 
if a spectral network line crosses into $R_1$ from anywhere
across the edge $g_1$, it is reflected from $f$ or $f'$ and
creates two SN lines going out from $R_2$ at the 
corresponding point 
in both outward directions. Same with $1,2$ interchanged.
\end{lemma}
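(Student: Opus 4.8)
The plan is to establish the directional claim first and then deduce the refraction corollary from it together with the refracting properties (R1) and (R2). Throughout I work on the edge $g_i(j)$ \emph{not} treated by Lemma \ref{onesn}, placing $q_i^{(m_{ij})}$ at the origin with the incoming segment $q_i^{(m_{ij}-1)}q_i^{(m_{ij})}$ along the negative $x$-direction and the candidate SN segment $q_i^{(m_{ij})}q_i^{(m_{ij}+1)}$ along the positive $x$-direction. For the case $m_{ij}\geq 1$ I would settle the direction by classifying the singularity at $q_i^{(m_{ij})}$. By the preceding lemma the incoming segment is a fold edge oriented toward $q_i^{(m_{ij})}$, so this point carries a fold edge in the negative $x$-direction; since the internal singularities are $4$- or $6$-fold points, it is one of $\four_2,\six_3,\six_4$. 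A $\four_2$ point and a $\six_4$ point with spine along $g_i$ both force the opposite ($+x$) direction to be a fold edge, contradicting the hypothesis that $q_i^{(m_{ij})}q_i^{(m_{ij}+1)}$ is an SN line; a $\six_4$ point whose spine is transverse to $g_i$ cannot have a fold edge along $g_i$ at all. Hence $q_i^{(m_{ij})}$ is a $\six_3$ point whose fold edges are arranged so that the $+x$ direction is open. By property (R1) the open edge at a singular point is an initial spectral network edge, i.e. it emanates outward from $q_i^{(m_{ij})}$; therefore the SN segment points toward $q_i^{(m_{ij}+1)}=t_i(j')$, as claimed.

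For the boundary case $m_{ij}=0$ the whole edge $g_i(j)=q_i^{(0)}q_i^{(1)}$ is the SN segment, and here I would invoke Proposition \ref{properties}(2) together with the reflection analysis inside the proof of Lemma \ref{onesn}. The SN line on this edge either issues from the $8$-fold endpoint among the pair $s_1(j''),s_2(j'')$, in which case by (R1) it emanates outward and hence toward $t_i(j')$, or it is the image under reflection at $f$ or $f'$ of the outgoing SN line at that $8$-fold point, which by Lemma \ref{onesn} again points toward $t_i(j')$. In either subcase the orientation agrees with the general statement, so the two cases combine into the single directional conclusion.

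To deduce the corollary, consider an SN line entering $R_1$ across $g_1$. Since the edges $g(2j)$ and $g(2j-1)$ are parallel to $f$ and $f'$, such a line travels straight and necessarily meets one of the fold edges $f,f'$, where it reflects. Because $R_1$ and $R_2$ are glued isometrically along $f,f'$ in the maximal collapsing pair, the crossing point on $g_1$ corresponds to a point on $g_2$, and property (R2) says the line may continue on the far side of the fold in \emph{either} of the two available directions. These two continuations are precisely the two SN lines emerging from $R_2$ at the corresponding point, one in each outward direction; the directional result of the first two paragraphs guarantees that whenever such a continuation runs along $g_2$ it is compatibly oriented toward $t_2(j')$. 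The symmetric statement with the roles of $1$ and $2$ exchanged follows by the same argument.

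The step I expect to be the main obstacle is the singularity classification at $q_i^{(m_{ij})}$ combined with the orientation bookkeeping: ruling out $\four_2$ and both $\six_4$ configurations requires carefully combining the inward/outward conventions for fold edges with property (R1) and the absence of BPS states, and the $m_{ij}=0$ endpoint case must be reconciled with the reflection mechanism of Lemma \ref{onesn} so that a single directional conclusion covers every case. Ensuring that the isometric fold identification of $R_1$ and $R_2$ interacts correctly with (R2) in the corollary, so that exactly two compatibly oriented outgoing lines are produced, is the other delicate point.
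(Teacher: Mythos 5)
Your argument for the main case $m_{ij}\geq 1$ contains a genuine error in the case analysis, and the erroneous step is also unnecessary. The false claim is that ``a $\six_4$ point whose spine is transverse to $g_i$ cannot have a fold edge along $g_i$ at all.'' A $\six_4$ point has four fold edges: the spine (an opposite pair) together with two further fold edges, $120^{\circ}$ apart, adjacent to one end of the spine. If the spine is transverse to $g_i$, say in the directions $\dxma$ and $\dxb$ in the paper's notation, the two extra fold edges can be $\dxa$ and $\dxc$, and $\dxa$ lies along $g_i$. This configuration is not hypothetical: it is exactly the fifth line of Table \ref{q6table}, where $q_1^{(m)}$ is a $\six_4$ point with fold edges $\overline{\dxa}_1, \overline{\dxma}_1, \dxb_1, \dxc_1$ (spine $\dxma$--$\dxb$, transverse to $g_1(j)$, with the inward-oriented fold $\overline{\dxa}_1$ along $g_1(j)$) and with the segment in direction $\dxmb$ toward $t_1(j')$ unfolded, so the hypothesis of the lemma is satisfied there. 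Consequently your conclusion ``hence $q_i^{(m_{ij})}$ is a $\six_3$ point'' is wrong, and as written your proof gives no argument in a case that actually occurs.

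The repair is immediate, and it is precisely the paper's entire (one-line) proof: no classification of the singularity type is needed. Whatever its type, $q_i^{(m_{ij})}$ is a singularity of the scaffolding, so property (R1) forces every unfolded edge germ at it to be an initial edge of the spectral network, hence oriented outward from the singularity; applied to the unfolded segment $q_i^{(m_{ij})}q_i^{(m_{ij}+1)}$ this yields the directional claim toward $t_i(j')$ directly. Since your final step is exactly this application of (R1), the detour through singularity types adds nothing but the error above. Your treatment of the boundary case $m_{ij}=0$ (via Proposition \ref{properties} and the reflection mechanism in the proof of Lemma \ref{onesn}) and your deduction of the refraction corollary from (R1) and (R2) are reasonable and in fact more explicit than the paper, which leaves both points implicit.
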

\begin{proof}
The point $q_i^{(m_{ij})}$ 
is a singularity so all outgoing non-folded
edges are initial for the spectral network. 
\end{proof}

\section{The new construction}

Let us now proceed with the collapsing operation of
glueing together $R_1$ and $R_2$, and trimming off the
resulting piece. After doing this we obtain the 
new construction $\Znew$. It is again ecarinate, complete,
normal and simply connected. 

Let $g(j)$ denote the edges in $\Znew$ obtained by 
identifying $g_1(j)\subset R_1$ with $g_2(j)\subset R_2$.
Similarly let $s(j)$ (resp. $t(j)$) denote the
points resulting from the identification of 
$s_1(j)$ and $s_2(j)$ (resp. $s_1(j)$ and $s_2(j)$).
The images of $b,b'$ are again denoted $b,b'$. 
We identify $Z-(R_1\cup R_2)$ with $\Znew -\bigcup g(j)$. 

A point $s(j)$ is obtained by gluing together an $8$-fold point
and a $6$-fold point, having removed $4$ sectors from each one.
It follows that $s(j)$ is a $6$-fold point.

If say $t_1(j)$ is a $\four _2$ then $t_2(j)$ is a
$\six_0$ point, indeed the edges 
$g_1(j)$ and
$g_1(j+1)$ must be folded so by Lemma \ref{onesn} 
both edges $g_2(j)$ and
$g_2(j+1)$ are SN lines. 
It follows in this case that $t(j)$ is a $\six_0$ point. 

If $t_i(j)$ are both $6$-fold points then $t(j)$ is an $8$-fold point. 

Suppose that an edge say $g_1(j)$ contains some
singularities $q_1^{(1)},\ldots , q_1^{(m)}$. As we have seen,
$q_1^{(1)}, \ldots , q_1^{(m-1)}$ are $\six_4$ points with
spine along $q_1(j)$. One can see that these become $\six_2$ points in $\Znew$. The segments of $g_1(j)$ that are
folded, become unfolded segments in $\Znew$ since
the opposite edge $g_2(j)$ is an SN line and a folded
segment glued to an unfolded one yields an unfolded segment in
the new construction. 

In order to determine the scaffolding of $\Znew$ it remains
to see what becomes of the last segment 
$q_1^{(m)}q_1^{(m+1)}$,  in 
case $g_1(j)$ contains some singularities, or of the
whole of $g(j)$ in case neither side contains singularities in
the interior. This depends on the singularities at $t_i(j')$
and on the last singular point $q_1^{(m)}$ if it is there,
or else on $s_i(j'')$. 

One must do an analysis of cases. The conclusion is
that the labeling of the resulting segment of $g(j)$ as folded or unfolded, is determined by these singularities.
In some cases the answer is indeterminate at 
$t(j')$ but determined by the other endpoint.
The determinations of fold/unfold coming 
from the two ends of the segment must be the same, 
because we are supposing the existence of some harmonic
map to a building compatible with the scaffolding.

If a segment becomes folded, then it is
oriented outwards from the $8$-fold point $t(j')$ 
(if it is a folded segment then we were not in the case
where one $t_i(j')$ is a $4$-fold point, treated above). 

We have seen in Lemma \ref{newrefract} 
that such a folded segment will
satisfy the refracting condition for reflection of SN lines. 
We have determined the scaffolding of $\Znew$, satisfying the
refracting condition, and with orientations of edges. 

Notice
that the two edges meeting  in $t(j)$ cannot both have singular points in the interior, as that would have meant that 
there was an intersection of fold lines in the 
interior of $R_1$ or $R_2$. 

What remains to be verified is that the new points $s(j)$, $t(j)$, and the images of the $q_i^{(m)}$ if they are there, 
fall into
the standard list of possible singularities; that the
required spectral network lines emanating from these points
exist; and that the orientations of fold lines are compatible
with the allowable orientations at these new singularities.

Discuss first the compatibilities of orientations 
of fold lines. An $8$-fold point $s_i(j)$ 
becomes a $6$-fold point $s(j)$. 
Any
fold lines not on the edges $g(j),g(j+1)$
will stay oriented in the outgoing direction. 
There may be one or two new fold lines on the edges
$g(j),g(j+1)$, but these will now orient inwards towards
$s(j)$. Indeed,
this only happens if $g(j)$ (resp. $g(j+1)$) has no interior
singularities and $t(j')$ is an $8$-fold point. 
See Table \ref{s8table} below for compatibility. 

Consider singularities interior to the edges.
Suppose say $g_1(j)$ contains a 
sequence of singular points with last one $q_1^{(m)}$. 
Let $q^{(m)}$ denote its image in $\Znew$. The segment 
$t(j')q^{(m)}$ might or might  not be folded. 
If it is folded,
it is oriented outward from $t(j')$ 
hence inward towards $q^{(m)}$.
We should check that this is compatible with the singularity
type of $q^{(m)}$. If $q_1^{(m)}$ is a $4$-fold point then 
so is $q^{(m)}$ and this compatibility holds. If our new segment
is folded it means that the previous
segment $q_1^{(m)}t_1(j')$ had to be unfolded. Recall that 
$q_1^{(m-1)}q_1^{(m)}$ is folded and oriented towards 
$q_1^{(m)}$. 
The direction into $R_1$
that is parallel to $f$ or $f'$ is folded, and this fold line
hits one of the edges $f$ or $f'$ at a $\six_4$ point getting
reflected back into $R_2$ and eventually forming a fold line
that will participate in the local picture at the point 
$q^{(m)}$. 

Using these facts there are two possibilities. First,
$q_1^{(m)}$ could be a $\six_3$ point whose fold edge along
$g_1(j)$, the segment $q_1^{(m-1)}q_1^{(m)}$, is oriented
inwards. The new segment
$t(j')q^{(m)}$ is not folded, and the 
resulting point $q^{(m)}$
is a nonsingular $\six_2$ point with compatible orientations of
fold edges (see the middle two lines of Table \ref{q6table}). 

The other possibility is that $q_1^{(m)}$ be a $\six_4$ point.
In this case the new segment $t(j')q^{(m)}$
must be folded, and oriented inwards towards $q^{(m)}$
since $t(j')$ is an $8$-fold point,
whereas $q^{(m-1)}q^{(m)}$ is
unfolded.  The resulting point $q^{(m)}$
is a $\six_4$ point. 
The inward oriented edge of its spine comes from the
inward oriented edge of the spine of $q_1^{(m)}$ reflected into $R_2$. 
See the next-to-last line of Table
\ref{q6table} below for the other 
fold edges and their orientations. 

This completes the discussion 
of compatibility of the 
orientations of new scaffolding edges. 

We need to verify that the new singular types are 
all contained in the list that we are using. 
This will be done by writing
down tables of the
possibilities. The cases of the points $b,b'$
are similar and are left to the reader.

Let us assume that we have a singular point on $R_1$.
Also, for singularities inside edges, we consider edges $g_1(j)$ 
that are parallel to $f$. The other cases are the same by symmetry. 

It will be convenient to establish a convention for speaking
of directions in $R_1,R_2$ at a singularity $s,t,q$. 
Number them as follows: 
$\dxo$ corresponds to the direction from our singularity, towards the interior of $R_i$, not parallel to $f$ or $f'$
(``up'' in the picture). 
Then $\dxa,\dxb, \ldots$ are the directions obtained by
turning clockwise $1,2,\ldots $ sectors. On the
other side, $\dxma, \dxmb, \ldots $ are the directions obtained
by turning counterclockwise that many sectors. These join up:
at a $6$-fold point $\dxc=\dxmc$ while at an $8$-fold 
point $\dxd = \dxmd$. 

The edge $f$ is oriented $\dxa$, and $f'$ is oriented $\dxma$. 

In our tables below we will be listing the folded directions
at singular points. In this case, a notation $\langle i \rangle$
means the edge germ emanating from the singularity in the
specified direction. With this notation, in the orientation of
our scaffolding, the fold edge is said to be oriented outwards.
The same edge oriented inwards towards the singularity
will be denoted $\overline{\langle i \rangle }$. 

We now consider a singularity of the form $s_1(j)$. It
is an $8$-fold point, and the corresponding $s_2(j)$ is
a nonsingular $6$-fold point (either $\six _0$ or $\six_2$).
These glue together to form a point $s(j)\in \Znew$. 
Table \ref{s8table} 
gives the structure of the scaffolding at 
$s(j)$ as a function of the structure at $s_1(j)$.

In order to fill in the table, recall that four sectors 
are removed from the neighborhood of $s_1(j)$, as well as
from the neighborhood of $s_2(j)$; then the remaining two sectors from $s_2(j)$ are glued back in to give the neighborhood of $s$. We make the convention that edge germs at $s(j)$ are numbered starting with the middle edge of the two sectors from $s_2(j)$ being $\dxo$; the two indeterminate lines are $\dxa , \dxma$,
then remaining $\dxb , \dxc , \dxmb$. These latter correspond to the directions $\dxc , \dxd , \dxmc$ respectively at $s_1(j)$.

We include a column in the table to say what is happening at the
point $s_2(j)$. Note that it is a nonsingular $6$-fold
point, hence either $\six_0$ or $\six_2$. If it is $\six_2$ then
the direction of the fold line comes from the direction of the
fold line at $s_1(j)$ that goes in direction either $\dxa$ 
or $\dxma$. This extra column will be most useful in our
third table below.   

We don't include configurations that are obtained by symmetry
(changing $\langle i \rangle$
to $\langle -i \rangle$) from ones that were 
already included, and we don't include configurations
(such as $\eight _1, \dxo$) that are ruled out. 

\begin{table}[h]
\caption{\label{s8table} Structure at $s(j)$}
\begin{tabular}{|c|c|c|c|c|}
\hline
$s_1(j)$   & \mbox{fold edges} & $s_2(j)$ &  $s(j)$     & \mbox{new fold edges}        \\
\hline
$\eight_0$ & \mbox{none} &  $\six_0$ & $\six_0$ & \mbox{none}
\\
\hline
$\eight_1$ & $\dxa_1$ & $\six_2\; \rule{0pt}{12pt}
\overline{\dxa}_2 , \dxmb_2$ & $\six_0$ &  \mbox{none}
\\
\hline
$\eight_1$ & $\dxb_1$ & $\six_0$ & $\six_0$ &  \mbox{none}
\\
\hline
$\eight_1$ & $\dxc_1$ & $\six_0$ & $\six_2$ &  
$\dxb , \overline{\dxma} \rule{0pt}{12pt}$
\\
\hline
$\eight_1$ & $\dxd_1$ & $\six_0$ & $\six_3$ &  $\dxc , 
\overline{\dxa}, \overline{\dxma}\rule{0pt}{12pt}$ 
\\
\hline
$\eight_2$ & $\dxa_1 , \dxmc_1$ & $\six_2\; \rule{0pt}{12pt}
\overline{\dxa}_2 , \dxmb_2$ & $\six_2$ &   
$\dxmb ,\overline{\dxa}\rule{0pt}{12pt}$
\\
\hline
$\eight_2$ & $\dxb_1 , \dxmb_1$ & $\six_0$ & $\six_0$ &  \mbox{none}
\\
\hline
$\eight'_2$ & $\dxa_1 , \dxb_1$ & $\six_2\; \rule{0pt}{12pt}
\overline{\dxa}_2 , \dxmb_2$ & $\six_0$ &  \mbox{none}
\\
\hline
$\eight'_2$ & $\dxb_1 , \dxc_1$ & $\six_0$ & $\six_2$ &  
$\dxb , \overline{\dxma}\rule{0pt}{12pt}$
\\
\hline
$\eight'_2$ & $\dxc_1 , \dxd_1$ & $\six_0$ & $\six_4$ &  
$\dxb , \dxc ,
\overline{\dxa}, \overline{\dxma}\rule{0pt}{12pt}$
\\
\hline
\end{tabular}
\end{table}

Along an edge $g_1(j)$ parallel to $f$ 
suppose given singularities
$q_1^{(1)},\ldots , q_1^{(m)}$. 
We have seen that for $1\leq u <m$
the $q_1^{(u)}$ have to be $\six_4$ points with spine along
$g_1(j)$ resulting in a nonsingular 
$\six_2$ point in $\Znew$ (as shows up in the first lines of the next table). 

Let us consider now the 
configurations for $q_1^{(m)}$ and resulting configurations for $q^{(m)}$ in $\Znew$, shown in Table \ref{q6table}. 
Recall that
in this case, the three sectors of $R_1$ and the three sectors of $R_2$ are cut out, and the remaining ones are glued together. We number the edges at the new point $q^{(m)}$ as follows: the 
edges $\dxb , \dxc$ exterior to $R_1$ keep the same numbers, whereas $\dxb , \dxc$ exterior to $R_2$ 
become respectively $\dxo , \dxma$ (in practice an edge $\dxma$ at $q_1^{(m)}$ reflects becoming $\dxb$ at the nonsingular point $q_2^{(m)}\in R_2$ opposite $q_1^{(m)}$
hence $\dxo$ at $q^{(m)}$); the directions $\dxa$ and $\dxmb$ correspond to the edge $g(j)$. Recall that the direction
$\overline{\dxa}$ 
is by hypothesis folded on $g_1(j)$ so the new edge $\dxa$ is unfolded. If $\dxmb$ is folded at $q_1^{(m)}$ then 
it becomes unfolded at $q^{(m)}$, whereas if it is unfolded then it  can become either.

\begin{table}[h]
\caption{\label{q6table} Structure at $q(m)$}
\begin{tabular}{|c|c|c|c|c|}
\hline
$q_1^{(m)}$   & \mbox{fold edges} & $q_2^{(m)}$ &  $q^{(m)}$     & \mbox{new fold edges}       
\\
\hline
$\six_4$ & $\rule{0pt}{12pt}\overline{\dxa}_1 , 
\overline{\dxma}_1 , 
\dxmb_1 , \dxc_1$ & $\six_2\; \dxma_2, \overline{\dxb_2}$& $\six_2$ &  
$\dxc , \overline{\dxo} \rule{0pt}{12pt}$
\\
\hline
$\six_4$ & $\rule{0pt}{12pt}\overline{\dxa}_1 , \dxma_1 , 
\dxmb_1 , \overline{\dxc}_1$ & 
$\six_2\; \overline{\dxma}_2, \dxb_2$ & $\six_2$ &  
$ \dxo,\overline{\dxc} \rule{0pt}{12pt}$
\\
\hline
$\six_3$ & $\rule{0pt}{12pt}\overline{\dxa}_1 , 
\overline{\dxma}_1 , 
\dxc_1$ &  
$\six_2\; \dxma_2, \overline{\dxb_2}$ & $\six_2$ &  
$\dxc , \overline{\dxo} \rule{0pt}{12pt}$
\\
\hline
$\six_3$ & $\rule{0pt}{12pt}\overline{\dxa}_1 , \dxma_1 , 
\overline{\dxc}_1$ &  
$\six_2\; \overline{\dxma}_2, \dxb_2$ & $\six_2$ &  
$ \dxo,\overline{\dxc} \rule{0pt}{12pt}$
\\
\hline
$\six_4$ & $\rule{0pt}{12pt}\overline{\dxa}_1 , 
\overline{\dxma}_1 , 
\dxb_1 , \dxc_1$ &  
$\six_2\; \dxma_2, \overline{\dxb_2}$ & $\six_4$ &  
$\dxb , \dxc , \overline{\dxo},
\overline{\dxmb} \rule{0pt}{12pt}$
\\
\hline
$\four_2$ & $\rule{0pt}{12pt}\overline{\dxa}_1 , 
\overline{\dxma}_1$ & 
$\six_2\; \dxma_2, \overline{\dxb_2}$ & $\four_2$ &  
$\overline{\dxo} ,\overline{\dxmb}  \rule{0pt}{12pt}$
\\
\hline
\end{tabular}
\end{table}

We now turn to the case of the singular points $t_1(j)$ 
glueing to the nonsingular $t_2(j)$ to yield $t(j)$. 
In this case, two sectors are removed from the neighborhood
of $t_1(j)$, two sectors removed from the neighborhood of $t_2(j)$, and the remaining sectors are put back together.
There are four remaining sectors from $t_2(j)$.  
We make the following 
labeling conventions, with subscripts indicating
sectors coming from neighborhoods of $t_1(j)$ or $t_2(j)$:
$$
\dxb _1 \mapsto \dxc ,\;\;\;\;  \dxc _1 \mapsto \dxd ,
\dxmb _1 \mapsto \dxmc ,  
$$
$$
\dxb _2 \mapsto \dxa ,\;\;\;\;  \dxc _2 \mapsto \dxo ,
\dxmb _2 \mapsto \dxma ,  
$$
and the indeterminate ones
$$
\dxa _1 \mbox{ or } \dxa _2\mapsto \dxb ,\;\;\;\;  
\dxma _1 \mbox{ or } \dxma _2\mapsto \dxmb . 
$$
In this case the structure of $t_2(j)$ is not determined by
that of $t_1(j)$ since it could depend on the singularities
along the adjacent edges, so it is included in the table.
The possibilities are unfolded, $\six_2$ folded in direction 
$\overline{\dxa},\dxmb$ or $\six_2$ folded in 
direction $\overline{\dxma} , \dxb$. 
If for example there is a fold in direction 
$\overline{\dxa} ,\dxmb$
then it came from a fold line in direction $\dxa$ at the
singularity $s_1(j)$ that was reflected on the edge $f'$,
and in this case the edge $g_1(j)$ is unfolded,  in particular
the direction $\dxa$ at $t_1(j)$ is unfolded. Similarly in the other direction. Again we omit cases that can be
obtained by symmetry. 

\begin{table}[h]
\caption{\label{t6table} Structure at $t(j)$}
\begin{tabular}{|c|c|c|c|c|}
\hline
$t_1(j)$   & \mbox{fold edges}   & $t_2(j)$ & $t(j)$     & \mbox{new fold edges}       
\\
\hline
$\six_0$ & \mbox{none} & $\six_0$ & 
$\eight_0, \eight_1,\eight_2$ &  
$\dxb ?, \dxmb ? $
\\
\hline
$\six_0$ & 
\mbox{none} & $\six_2 \;
\rule{0pt}{12pt}\overline{\dxma}_2,\dxb_2 $& 
$\eight_1, \eight '_2$ &  
$\dxa ,\dxb ? $
\\
\hline
$\six_2$ & $\rule{0pt}{12pt}
\overline{\dxa}_1,\dxmb _1$ & $\six_0$ & 
$\eight_1,\eight '_2$ &  
$\dxmc,  \dxmb ? $
\\
\hline
$\six_2$ & $\rule{0pt}{12pt}
\overline{\dxa}_1,\dxmb_1$ & $\six_2 \;
\overline{\dxma}_2,\dxb_2$  & 
$\eight _2$ &  
$\dxmc,  \dxa $
\\
\hline
$\six_2$ & $\rule{0pt}{12pt}
\overline{\dxa}_1,\dxmb_1$ & $\six_2 \;
\overline{\dxma}_2,\dxb_2$  & 
$\eight _2$ &  
$\dxmc,  \dxa $
\\
\hline
$\six_3$ & $\rule{0pt}{12pt}
\overline{\dxa}_1,\overline{\dxma}_1,\dxc_1$ 
& $\six_0$  & 
$\eight _1$ &  
$\dxd $
\\
\hline
$\six_4$ & $\rule{0pt}{12pt}
\overline{\dxa}_1,\overline{\dxma}_1,\dxb_1 , \dxc_1$ 
& $\six_0$  & 
$\eight '_2$ &  
$\dxc, \dxd $
\\
\hline
$\four_2$ & $\rule{0pt}{12pt}
\overline{\dxa}_1,\overline{\dxma}_1$ 
& $\six_0$  & 
$\six _0$ &  
\mbox{none}
\\
\hline
\end{tabular}
\end{table}

In some rows of the table, the answer is not determined by the
information local to $t_i(j)$. In those cases we have
included the various possibilities. Notice that the
marking of edges $\dxb, \dxmb$ will be determined from what
happens in the two previous tables at the adjacent singularities
on these segments. 

\begin{corollary}
The scaffolding of $\Znew$ is well-defined, with orientations
of the fold edges. 
From the tables, the types of local pictures of the
scaffolding for $\Znew$ are in our standard list $\four_2,\six_3,\six_4,\eight_0,\eight_1,\eight _2,\eight'_2$.
The orientations of the fold edges at these singularities are
compatible with the allowable configurations. 
\end{corollary}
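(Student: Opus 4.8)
The plan is to verify the three assertions—well-definedness of the scaffolding, membership of all local pictures in the standard list, and compatibility of orientations—by a type-by-type inspection, since the preceding analysis has already reduced everything to a finite bookkeeping problem. First I would observe that $\Znew$ agrees with $Z$ away from $R_1\cup R_2$ under the identification $Z-(R_1\cup R_2)\cong \Znew-\bigcup g(j)$, so no new singularities or scaffolding ambiguities arise there. The only points to examine are therefore the glued points $s(j)$ and $t(j)$, the interior images $q^{(m)}$ of singular chains along an edge, and the two endpoints $b,b'$. By the symmetry between the roles of $f$ and $f'$ (and between $R_1$ and $R_2$) it suffices to treat the cases in which the distinguished singularity lies on $R_1$ and, for interior points, on an edge $g_1(j)$ parallel to $f$; the remaining cases are mirror images.

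Next I would establish well-definedness of the marking and orientation. By the glue-and-trim rule a segment glued to a folded segment inherits the marking of the other side, so the only genuine indeterminacy is the marking of the terminal segment $q_i^{(m)}q_i^{(m+1)}$ (or of the whole of $g(j)$ when neither side has interior singularities). This terminal segment abuts two singularities of $\Znew$, and I would read the determination of its marking from each end using the relevant row of Table \ref{s8table}, Table \ref{q6table} or Table \ref{t6table}. Where one endpoint leaves the marking indeterminate—the entries flagged with ``?'' in Table \ref{t6table}—the other endpoint fixes it. The orientation of any newly folded segment is then assigned by the rule already in force, namely outward from its $8$-fold endpoint; this is unambiguous because a folded terminal segment never occurs in the case where one $t_i(j')$ is a $4$-fold point, which was treated separately.

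With well-definedness in hand, membership in the standard list and orientation compatibility are read directly off the three tables. From Table \ref{s8table}, an $8$-fold point $s_1(j)$ glued to a nonsingular $s_2(j)$ yields $s(j)$ of type $\six_0$, $\six_2$, $\six_3$ or $\six_4$; from Table \ref{q6table}, an interior singularity $q_1^{(m)}$ yields $q^{(m)}$ of type $\six_2$, $\six_4$ or $\four_2$; and from Table \ref{t6table}, a corner $t_1(j)$ yields $t(j)$ of type $\eight_0$, $\eight_1$, $\eight_2$, $\eight'_2$ or $\six_0$. In each row the final column records the new fold directions together with their orientations (an overline such as $\overline{\dxa}$ indicating inward orientation in the $\dxo,\dxa,\ldots$ convention), and one checks these against the allowable configurations pictured for each standard type: outward at the $8$-fold points, inward at the $\four_2$ point, and the prescribed $120^{\circ}$ patterns at $\six_3$ and $\six_4$. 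The cases of $b,b'$ being analogous, they are disposed of the same way.

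I expect the main obstacle to be the consistency claim underlying well-definedness: that the fold/unfold determination propagated from one end of a terminal segment never contradicts the one propagated from its other end. The tables are purely local, so global coherence of the markings is not automatic from them; it is forced only by the standing assumption that $\Znew$ admits a harmonic map to a negatively curved building compatible with the scaffolding. Making this genuinely rigorous, rather than invoking the existence of such a map, would require an independent combinatorial argument matching the indeterminate entries across each edge $g(j)$, and it is precisely here that the present treatment leans on the harmonic-map hypothesis rather than on a self-contained verification.
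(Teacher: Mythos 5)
Your proposal is correct and follows essentially the same route as the paper: the corollary there is precisely a summary of the preceding case analysis, with well-definedness reduced to the terminal segments of the $g(j)$, membership in the standard list and orientation compatibility read off Tables \ref{s8table}, \ref{q6table}, \ref{t6table}, and the cross-edge consistency of fold/unfold markings resolved by invoking the assumed existence of a harmonic map to a building compatible with the scaffolding. Your closing observation that this last point rests on the harmonic-map hypothesis rather than an independent combinatorial check accurately reflects what the paper itself does.
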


\begin{proposition}
\label{snsubset}
Define the refracting 
spectral network of $\Znew$ to be generated
by an initial SN line going in every non-fold direction
from each of the singularities, and closed under collisions as well as refraction upon crossing fold lines of the new scaffolding. Then, any SN-line of this new spectral network, outside of the edges $g(j)$, is contained in
the previous spectral network of $Z$. SN lines along non-folded
segments of the $g(j)$ are with reversed orientation 
with respect to those
of $Z$. Under the assumption that there were no BPS states in 
the refracting spectral network of $Z$, then there are no BPS 
states in the refracting spectral network of $\Znew$. 
\end{proposition}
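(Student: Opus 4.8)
The plan is to prove the three assertions in order, leaning throughout on the identification $Z-(R_1\cup R_2)\cong \Znew-\bigcup g(j)$ recorded earlier in this section. Under this identification the scaffolding, the singularity types, and hence the local propagation rules for spectral network lines (straight segments, collisions at $\six_0$ points, refraction across fold edges) agree away from the collapsed region. Since an initial SN line is emitted in every non-fold direction and since collisions and refractions are purely local, the only places where the refracting spectral network of $\Znew$ can differ from that of $Z$ are at the new singularities $s(j),t(j)$ and the images $q^{(m)}$ of interior singularities, and along the glued edges $g(j)$ themselves. I would treat these two sources of discrepancy separately.

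For the new singularities I would read off from Tables \ref{s8table}, \ref{q6table} and \ref{t6table} that every outward non-fold direction at $s(j)$, $t(j)$, $q^{(m)}$ corresponds to an outward non-fold direction at the originating singularity $s_1(j)$, $t_1(j)$, $q_1^{(m)}$ (or its partner), so that the initial SN line it emits, followed outside the pocket $R_1\cup R_2$, coincides with an initial SN line already present in $Z$. Along the edges $g(j)$, Lemma \ref{onesn} tells us that one of $g_1(j),g_2(j)$ is an SN line with no interior singularities, and the glue-and-trim rule turns the folded segments on the other side into unfolded segments; Lemma \ref{newrefract} then identifies the refraction of any SN line crossing $g(j)$ in $\Znew$ with the reflection, inside the bounded region $R_1\cup R_2$, of the corresponding SN line of $Z$ off $f$ or $f'$. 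Combining these, an induction on the order in which SN lines are generated shows that every SN line of $\Znew$, once it leaves the edges $g(j)$, is a piece of an SN line of $Z$; this is the first assertion. The second assertion is the bookkeeping observation that the non-fold segment of $g(j)$ is the image of the SN line on $g_2(j)$ (say), which by Lemma \ref{onesn} points toward $t_2(j')$, whereas the reflection description of Lemma \ref{newrefract} makes it emanate outward at the reflection point, i.e.\ with the opposite orientation.

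For the absence of BPS states I would argue by contradiction and unfold. Suppose $\beta\colon G\to\Znew$ is a compact spectral network graph. Transport the part of $G$ lying in $\Znew-\bigcup g(j)$ verbatim to $Z$ via the identification, replace each arc of $G$ that crosses an edge $g(j)$ by the reflected path through the pocket $R_1\cup R_2$ supplied by Lemma \ref{newrefract}, and replace each arc running along a non-fold segment of $g(j)$ by the corresponding SN segment of $Z$ furnished by the second assertion (the reversal of orientation is harmless, since being a spectral network graph and being compact are insensitive to the per-edge choice of sign of the defining form). By the first assertion and the case analysis at the new singularities, trivalent vertices still land on $\six_0$ collisions, endpoints at the new singularities correspond to endpoints at the old singularities, and every crossing of $g(j)$ is resolved inside the bounded set $R_1\cup R_2$. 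The resulting graph $G'\to Z$ is therefore again a compact spectral network graph, i.e.\ a BPS state of $Z$, contradicting the hypothesis.

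The main obstacle is the verification underlying the second paragraph: checking, case by case from Tables \ref{s8table}, \ref{q6table} and \ref{t6table}, that at every admissible new singularity each emitted initial SN line is genuinely the shadow of an SN configuration of $Z$ and that no spurious new initial SN line appears. Once this local dictionary is in place, together with the refraction identity of Lemma \ref{newrefract}, the unfolding in the last paragraph is essentially automatic; the only remaining point is the observation that the inserted reflection paths stay within the bounded pocket $R_1\cup R_2$, so that compactness of $G'$ follows from that of $G$.
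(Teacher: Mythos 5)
Your handling of the first two assertions is essentially the paper's: identify $Z-(R_1\cup R_2)$ with $\Znew-\bigcup g(j)$, read the tables to see that the outward non-fold directions at the new singularities carry SN lines already present in $Z$ (possibly produced by collisions), and use Lemmas \ref{onesn} and \ref{newrefract} along the glued edges. The gap is in the BPS part. Your unfolding argument hinges on the claim that ``the reversal of orientation is harmless, since being a spectral network graph and being compact are insensitive to the per-edge choice of sign of the defining form.'' This is exactly the point that cannot be assumed: the refracting spectral network is generated \emph{directionally} (initial lines go outward from singularities, collisions and refractions propagate lines forward), so a segment of $g_i(j)$ carrying an SN line of $Z$ pointing toward $t_i(j')$ does not thereby carry one pointing the other way. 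Indeed, the second assertion of Proposition \ref{snsubset} says precisely that the lines along the $g(j)$ in $\Znew$ are genuinely new, reversed lines not present in the network of $Z$; if orientation were irrelevant, that assertion would be vacuous. The real danger, which your argument never excludes, is that such a reversed line, together with an initial line emitted toward it from a singularity at its far end, forms a two-way segment joining two singular points of $\Znew$ --- a BPS state of $\Znew$ with no counterpart in $Z$, so that no transfer argument can reach a contradiction. Ruling this out is where the content lies, and it is done in the paper by direct inspection of the tables: whenever a singular $\six_2$, $\six_3$ or $\six_4$ is created at a point $s(j)$ or $q^{(m)}$, the adjacent segments of $g(j)$, $g(j+1)$ come out \emph{folded} (see Tables \ref{s8table} and \ref{q6table}), which blocks the formation of such two-way streets. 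Your proposal performs no such check.

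A second concrete failure of the unfolding step: the claim that ``endpoints at the new singularities correspond to endpoints at the old singularities'' is false at the points $t(j)$. By Table \ref{t6table}, $t(j)$ can be an $8$-fold singularity of $\Znew$ created from \emph{two nonsingular} points $t_1(j)$, $t_2(j)$ of $Z$ (first rows of that table). A BPS state of $\Znew$ with an endpoint at such a $t(j)$ transports to a graph in $Z$ whose endpoint lies at a nonsingular point, which is not a spectral network graph. To repair this one must replace the endpoint by the configuration in $Z$ that generates the outgoing line --- the SN lines along the edges $g_i(j)$, $g_i(j+1)$ colliding at $t_i(j)$, which is what the paper means by ``using the collision process in $Z$'' --- and then verify that the added generating lines themselves terminate at singularities of $Z$, so that compactness is preserved. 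This extension, together with the systematic tracing of all SN lines that can start in directions along the $g(j)$, is exactly the detailed work that the paper itself flags as still missing from its own sketch; your proposal declares it ``essentially automatic,'' which it is not.
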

\begin{proof} {\em (Sketch)}---We verify in each of the
cases contained in the tables, that there are SN lines in $Z -(R_1\cup R_2)$ corresponding to all non-folded outward
directions from singular points. In the case of non-folded
edges that are segments of the $g(j)$, there were
SN lines in the non-folded segments of $g_i(j)$ going
in the opposite direction. Switch the directions
of these, and when these new SN lines 
meet a $\six_2$ point, notice that it came from a singular
point and the refracted directions are among the
directions containing SN lines of $Z$ (or we
continue along the next segment of $g(j)$ to the next $\six_2$
point). 

At an $8$-fold singularity obtained from the third
table, the SN lines in all directions are generated by the
SN lines along the segments of $g_i(j)$ and $g'_i(j)$,
sometimes by using the collision process in $Z$.  

We should check that there are no BPS states in the new
spectral network. Concerning lines not
on the edges $g(j)$ this comes from the inclusion into the
previous spectral network in $Z-(R_1\cup R_2)$, and the existence of SN lines going outwards from any new singularities
as noted above. 

We therefore need to consider new SN lines along segments of $g(j)$. 
For this, let us notice that in Table \ref{s8table},
whenever a singular $\six_3$ or $\six_4$ is created,
the edges going along $g_(j)$ and $g(j+1)$, there
denoted $\dxa$ and $\dxma$, are folded. Furthermore,
whenever a $\six_2$ is created, one of those two edges
is folded so a BPS state between the two adjacent $s(j),s(j+1)$
is not created. 
Similarly, in \ref{q6table} when a $\six_4$ or point is created,
the segment after it on $g(j)$, denoted there 
$\overline{\dxmb}$, is folded.  

This has only been a sketch of proof, a more detailed discussion is needed in order to follow through all possible SN lines that
might start with directions along the edges $g(j)$. 
\end{proof}

\section{Scholium}
\label{scholium}

We now review what has been done (or sketched) above. 
From $X$ we created an initial complete normal ecarinate 
construction $Z^{\rm init}$ 
and we are assuming that this is done following  
Principles \ref{nobps} and \ref{chooseinitial}. 

Thus $Z^{\rm init}$ is 
provided with a fully refracting scaffolding, 
whose associated spectral network has no BPS states. 
Initially 
the scaffolding has only $\eight_1,\eight_2,\eight'_2,\four_2$
singularities, so the endpoints of the post-caustics are
$\eight_1$ points, and the $8$-fold and $4$-fold
points alternate.

The reduction process will consist
of a sequence of reduction steps starting
with $Z^{\rm init}$. Let us denote by $Z$ the construction
obtained after a certain number of steps. Our goal is to describe the next reduction step. 

Our construction $Z$ is again complete, ecarinate, normal, 
and it is provided with a refracting scaffolding with oriented
fold edges.
Our assumptions are as follows:
\newline
---that the refracting spectral network generated 
by the scaffolding has 
no BPS states;
\newline
---and that the types of points in the scaffolding are in the standard list of Definition \ref{singdef}, taking account orientations of edges.   
 
Suppose that there are some remaining fold edges. 
By Corollary \ref{existsfourtwo}, there exists a $\four_2$ point.
The reduction step will be to 
collapse a neighborhood of this $\four_2$ point in a good way. 

Choose a maximal collapsing pair $R_1,R_2$ at this vertex. 
These regions are arranged with singularities and
edges satisfying the properties of 
Proposition \ref{properties} and the subsequent discussion. 

We then glue together $R_1$ and $R_2$, and trim away the
resulting piece (except for the union of edges $g(j)$),
to get a new construction $\Znew$. This is the result of a single
step of our reduction process. 

The main point is to verify
that $\Znew$ is provided with a well-defined refracting scaffolding that still
satisfies the required properties. We have seen that
the configurations at singularities in $Z$ determine
the fold edges at the new points in $\Znew$. Indeed, 
this is the case at points of the form $s(j)$ and 
$q^{(m)}$, and the only indeterminacy at points $t(j)$
is along segments that will be connected either to
points $q^{(m)}$ or $s(j)$ so the fold edges are determined.
We have also assigned orientations to the fold edges.

Specific analysis of each case allows to fill in 
Tables \ref{s8table}, \ref{q6table}, \ref{t6table}. These
show that the new singularities are only of the types
in our standard list. Furthermore, we see here that the
types of singularities are compatible with the assigned orientations of the fold edges in the new scaffolding.

We noted along the way that the fold edges of the new scaffolding have the required refracting effect on spectral network lines. The sketch of proof of 
Proposition \ref{snsubset}
shows how the new spectral network is a subset of the
previous one, apart from the spectral network lines that might have switched directions along the
unfolded segments of the edges $g(j)$, and this spectral
network doesn't have any BPS states. 

This completes the verification that our new construction
$\Znew$ has the required structures and satisfies the 
required properties so it can be used as the starting point in
a next step of the reduction process.  

\section{Further questions} 
\label{sec-further}

We  have described a single step of the
reduction process. The main question will now be to obtain a
convergence statement 
saying that the process ends in
finitely many steps with a construction $Z^{\rm core}$
whose scaffolding has an empty set of edges. Suppose it
does end. The only singularities of $Z^{\rm core}$ are 
$\eight_0$ points of negative curvature

This reduced construction will be the {\em core} 
of the pre-building
that we are conjecturing to exist in \cite{KNPS2}. 
In order to get the pre-building, the steps of  putting back
in the pieces that have been trimmed off, need to be done
as described in \cite{KNPS2}. 

The construction of the core $Z^{\rm core}$ may be seen
as a $2$-dimensional analogue of the Stallings core graph \cite{ParzanchevskiPuder,Stallings}. 
It should be interesting to compare these combinatorics
to the ones of \cite{AKT1,AKT2,IwakiNakanishi}. 

In current work with Fabian Haiden,
we hope to apply this operation 
$$
X\mapsto Z^{\rm init}  \mapsto Z^{\rm core}
$$
in order to generalize the work of Bridgeland and Smith
constructing stability conditions
\cite{BridgelandSmith} from $SL(2)$ to $SL(3)$. 

The core
$Z^{\rm core}$ has a natural flat structure with geometry
modeled on the standard appartment $A$ for $SL(3)$.
This geometric structure carries with it a natural cyclic
$3$-fold spectral covering with ramification at the $8$-fold
singular points. The $8$-fold singular points are 
negatively curved conical points for the flat metric, with
angles of $480^{\circ}$. The flat metric determines
a conformal and hence complex structure; this is
a modification of the complex structure of the
original Riemann surface $X$, and the cyclic covering
corresponds to a cubic differential. This modification looks to 
be a discrete or possibly ``tropical'' analogue of Labourie's
result \cite[Conjectures 1.6, 1.7]{LabourieInv} \cite{Labourie}, 
replacing a general spectral curve by a 
cubic differential.  

The first 
conjecture is that 
a minimization process provides a stability condition
corresponding to the cubic differential, on the category of
sections of the perverse Schober with fiber $A_2^{CY2}$
corresponding to the cyclic spectral covering. 

If we can do that, then a procedure for defining
the stability condition for a general $SL(3)$-spectral curve
$\Sigma$ 
will be to define the categories $D_{\leq \theta}$ of
objects of phase $\leq \theta$, for any $\theta$ by
considering the core $Z^{\rm core}(e^{i\theta}\Sigma )$ and
using the stability condition given by the conjectured minimization process of the previous paragraph. 

These $t$-structures should each 
satisfy the required axioms for taking part in
a stability condition. 
Therefore, taken together for all phases they should define a stability condition.
We are just beginning to work on this program.


\begin{thebibliography}{MM}


\bibitem{AKT1}
T. Aoki, T. Kawai, Y. Takei.
New turning points in the exact WKB analysis for higher-order ordinary differential equations.
{\em Analyse alg\'ebrique des perturbations singuli\`eres, I (Luminy 1991)}
{\sc Travaux en cours} {\bf 47}, Hermann, Paris (1994), 69-84. 

\bibitem{AKT2}
T. Aoki, T. Kawai, Y. Takei. On the exact steepest descent method: a new method for the
description of Stokes curves. {\em J. Math. Phys.} {\bf 42} (2001), 3691-3713.

\bibitem{BridgelandSmith}
T. Bridgeland, I. Smith. Quadratic differentials as stability conditions. {\em Publ. Math. I.H.E.S.}
(DOI) 10.1007/s10240-014-0066-5 (2014).

\bibitem{CollierLi}
B. Collier, Q. Li. Asymptotics of certain families of Higgs bundles in the
Hitchin component. Preprint 	arXiv:1405.1106  (2014).

\bibitem{Dumas}
D. Dumas. Holonomy limits of complex projective structures.
Preprint arXiv:1105.5102 (2011). 


\bibitem{GMN-Wall}
D. Gaiotto, G. Moore, A. Neitzke. Wall-crossing, Hitchin systems and the WKB approximation.
{\em Adv. Math.} {\bf 234} (2013), 239-403.

\bibitem{GMN-SN}
D. Gaiotto, G. Moore, A. Neitzke. Spectral networks.
{\em Ann. Henri Poincar\'e} {\bf 14} (2013), 1643-1731.


\bibitem{GMN-Snakes}
D. Gaiotto, G. Moore, A. Neitzke. Spectral networks and snakes.
{\em Ann. Henri Poincar\'e} {\bf 15} (2014), 61-141.

\bibitem{GromovSchoen}
M. Gromov, R. Schoen. 
Harmonic maps into singular spaces and 
$p$-adic superrigidity for lattices in groups of rank one.
{\em Publ. Math. I.H.E.S.} {\bf 76} (1992),
165-246.


\bibitem{HaidenKatzarkovKontsevich}
F. Haiden, L. Katzarkov, M. Kontsevich. Stability in Fukaya categories of surfaces.
Preprint arXiv:1409.8611 (2014).

\bibitem{IwakiNakanishi}
K. Iwaki, T. Nakanishi.
Exact WKB analysis and cluster algebras. Preprint arXiv:1401.7094 (2014).


\bibitem{KapovichMyasnikov}
I. Kapovich, A. Myasnikov. Stallings foldings and subgroups of free groups.
{\em J. Algebra} {\bf 248} (2002), 608-668.

\bibitem{Kapranov-Schechtman}
M. Kapranov, V. Schechtman.
Perverse Schobers. Preprint arXiv: 1411.2772 (2014). 

\bibitem{KNPS}
L. Katzarkov, A. Noll, P. Pandit, C. Simpson. Harmonic maps to buildings and singular perturbation
theory. {\em Commun. Math. Phys.} (DOI) 10.1007/s00220-014-2276-6 (2015).

\bibitem{KNPS2}
L. Katzarkov, A. Noll, P. Pandit, C. Simpson. 
Constructing buildings and harmonic maps. 
Preprint arXiv:1503.00989 (2015). 

\bibitem{KontsevichSoibelman}
M. Kontsevich, Y. Soibelman. Wall-crossing structures in Donaldson-Thomas invariants, integrable systems and Mirror Symmetry. Preprint arXiv:1303.3253 (2013).

\bibitem{LabourieInv}
F. Labourie. Anosov flows, surface groups and curves in
projective space. {\em Inventiones} {\bf 165} (2006),
51-114. 

\bibitem{Labourie}
F. Labourie. Flat projective structures on surfaces and
cubic holomorphic differentials. {\em Pure Appl. Math. Q.}
{\bf 3} (2007), 1057-1099.

\bibitem{MSWW}
R. Mazzeo, J. Swoboda, H. Weiss, F. Witt.
Ends of the moduli space of Higgs bundles. 
Preprint arXiv:1405.5765 (2014).

\bibitem{Mochizuki}
T. Mochizuki. Asymptotic behaviour of certain families of harmonic bundles on Riemann surfaces. Preprint 
arXiv:1508.05997 (2015). 

\bibitem{MorganShalen}
J. Morgan, P. Shalen. Valuations, trees, and degenerations of hyperbolic structures. I,
{\em Ann. of Math.} {\bf 120} (1984), 401-476.

\bibitem{Parreau}
A. Parreau. Compactification d'espaces de repr\'esentations de groupes de type fini.
{\em Math. Z.} {\bf 272} (2012), 51-86.

\bibitem{ParzanchevskiPuder}
O. Parzanchevski, D. Puder.
Stallings graphs, algebraic extensions and primitive elements.
Preprint arXiv:1210.6574 (2012).


\bibitem{Stallings}
J. Stallings. Topology of finite graphs. {\em Invent. Math.} {\bf 71} (1983), 551-565.


\end{thebibliography}
\end{document}